\newcommand{\dal}{\square}
\newcommand{\R}{{\mathbb R}}
\newcommand{\N}{{\mathbb N}}
\newcommand{\ve}{\varepsilon}
\newcommand{\pa}{\partial}
\newcommand{\jb}[1]{\langle #1 \rangle}
\newcommand{\jbx}{\jb{x}}
\newcommand{\norm}[2]{\|#1 \!:\! #2\|}
\newcommand{\Do}{\Omega}
\newcommand{\TN}[2]{|\!|\!| #1 |\!|\!|_{#2}} 
\DeclareMathOperator{\supp}{supp}
\newtheorem{theorem}{Theorem}
\newtheorem{lm}{Lemma}
\newtheorem{prop}[lm]{Proposition}
\newtheorem{cor}[lm]{Corollary}
\newtheorem{remark}[lm]{Remark}
\numberwithin{equation}{section}
\numberwithin{theorem}{section}
\numberwithin{lm}{section}
\begin{document}
\title[Lifespan for nonlinear wave equations]
{Lower bound of the lifespan of solutions to
 semilinear wave equations in an exterior domain}

\author[S.~Katayama]{Soichiro Katayama}
\address{Department of Mathematics, Wakayama University, 930 Sakaedani, Wakayama 640-8510, Japan}
\email{katayama@center.wakayama-u.ac.jp}
\author[H.~Kubo]{Hideo Kubo}
\address{Division of Mathematics,
Graduate School of Information Sciences,
Tohoku University,
Sendai 980-8579, Japan}
\email{kubo@math.is.tohoku.ac.jp}
\dedicatory{Dedicated to Professor Akitaka Matsumura on the occasion of his
$60$th birthday}
\date{}
\begin{abstract}
We consider the Cauchy-Dirichlet
problem for semilinear wave equations
in a three space dimensional domain
exterior to a bounded and non-trapping obstacle.
We obtain a detailed estimate for 
the lower bound of the lifespan of classical solutions
when the size of initial data tends to zero, in a similar spirit
to that of the works of John and H\"ormander where the Cauchy problem 
was treated.
We show that our estimate is sharp at least for some special case.
\end{abstract}
\maketitle
\section{Introduction}
\label{sec:1}
In this paper we consider 
the mixed problem for semilinear wave equations in an exterior domain 
$\Omega(\subset \R^3)$ with compact and smooth boundary $\pa \Omega$:
\begin{align}\label{eq}
& \dal u =F(\pa u), & (t,x) \in (0,T)\times \Omega,
\\ \label{dc}
& u(t,x)=0, & (t,x) \in (0,T)\times \partial\Omega,
\\ \label{id}
& u(0,x)=\ve f_0(x),\ (\partial_t u)(0,x)=\ve f_1(x), & x\in \Omega,
\end{align}
where $\dal=\pa_t^2-\Delta_x$ with the Laplacian $\Delta_x=\sum_{j=1}^3\pa_{x_j}^2$,
$\pa u=(\pa_t u, \nabla_x u)$, and
$\ve$ is a small and positive parameter.
We suppose that $\Omega=\R^3 \setminus \overline{\mathcal O}$ with
a bounded open set ${\mathcal O}(\subset \R^3)$.
Throughout this paper, we assume that ${\mathcal O}$ is a non-trapping obstacle
(see, e.g., Melrose~\cite{Mel79} and Lax-Phillips~\cite[Epilogue]{LaPh}
for the definition;
for example a star-shaped obstacle is known to be non-trapping).
Without loss of generality, we may also assume
\begin{equation}
{\mathcal O}\subset \{x\in \R^3\,;\, |x|\le 1\}.
\end{equation}
Suppose that 
\begin{align}\label{0.3}
F(\pa u)=\sum_{a,b=0}^3 g^{a,b} 
 (\partial_a u) (\partial_b u) 
\end{align}
with real constants $g^{a,b}$,
where $\pa_0=\pa/\pa t$, and $\pa_j=\pa/\pa x_j$ for $j=1,2,3$.
To avoid a complicated discussion on 
the compatibility condition, 
we assume that $\vec{f}=(f_0,f_1)\in
\bigl(C^\infty_0(\Omega)\bigr)^2$ 
(we can relax this assumption; see Remark~\ref{GeneralData} below).

We are interested in the behavior of the lifespan $T_\ve$ 
of the solution $u$ to \eqref{eq}--\eqref{id} as $\ve \to +0$.
Here the lifespan $T_\ve$ is defined by
the supremum of all $T>0$ such that the mixed problem 
(\ref{eq})--(\ref{id})
admits a unique smooth solution $u$ in $[0, T) \times\Omega$.
When $t$ is small, it is natural to expect that $u(t,x)$ can be approximated by 
$\ve u_0(t,x)$, where $u_0$ is the solution to the
corresponding linear homogeneous wave equation\,:
\begin{align}\label{hw}
& \dal u_0(t,x)= 0, & (t,x) \in (0,T)\times \Omega,
 \\ \label{dch}
& u_0(t,x)=0, & (t,x) \in (0,T)\times \partial\Omega,
\\ \label{idh}
& (u_0(0,x), (\partial_t u_0)(0,x))=\vec{f}(x), & x\in \Omega.
\end{align}
This is in fact the case, as long as $\ve \log t \le A$
with a suitably small number $A$
(see, e.g., Keel, Smith and Sogge \cite{KeSmiSo02}, and Kubo \cite{Kub06}).
However, when $\ve \log t \ge A$, the solution $u(t,x)$
for the nonlinear problem might not stay close to the
free solution $\ve u_0(t,x)$.
For instance, if $F(\pa u)=(\pa_t u)^2$,
then one can find a class of initial data for which
the solution blows up in finite time (see Theorem~\ref{Upper} below).
Therefore, in order to understand the behavior of the solution 
$u(t,x)$ for $\ve \log t \ge A$,
we need to take the nonlinear effect into account so that a better approximate solution
can be constructed for such large values of $t$.
We construct such an approximate solution by solving a nonlinear ordinary differential equation 
\eqref{3.5} which is related to \eqref{eq} but is irrelevant to the presence of the obstacle.
The issue is to find a suitable 
initial data for the ordinary differential equation so that we can
match the nonlinear approximate solution
with the free solution $\ve u_0(t,x)$
that is relevant to the presence of the obstacle.
This will be done by making use of the scattering theory for the linear 
exterior problem:
As is well known, for each fixed $s\in \R$ and $\theta \in S^2$,
there exists a limit of $-t\pa_t u_0(t,x)$
along the ray $\{(t,x); |x|-t=s,\ x/|x|=\theta\}$ as $t\to \infty$ 
(see, e.g., \cite{LaPh}).
We write the limit as ${\mathcal F}_+(s, \theta)$; in other words
we define ${\mathcal F}_+(s,\theta)$ by
\begin{align}\label{DefRadiation}
{\mathcal F}_+(s,\theta):=\lim_{t\to \infty} (-t)(\pa_t u_0)
\bigl(t,(s+t)\theta\bigr)
\end{align}
for $s \in \R$ and $\theta \in S^2$.
In \cite{kkInverse} the rate of the convergence in \eqref{DefRadiation} was studied
(see \eqref{1.0} and \eqref{f-00} below).
Choosing $\mathcal{F}_+$ as the initial data
for the ordinary differential equation, 
we shall construct a good approximation
of the solution to the 
original mixed problem (see Section \ref{Approx} below for the details).
Once we have such an approximate solution, we are able to 
obtain a lower bound of the lifespan $T_\ve$ as $\ve \to +0$,
analogously to the case of the Cauchy problem
studied by H\"ormander \cite{Hoe87, Hoe97} and John \cite{Joh87}
independently, and also by 
others (see \cite{Ali01, God93, Hos00} for instance).

In order to state our result precisely, we set
\begin{align}\label{a}
G(\theta)=\sum_{a,b=0}^3 g^{a,b} \theta_a \theta_b 
\quad \mbox{for} \ \theta=(\theta_1,\theta_2, \theta_3)
  \in S^2,
\end{align}
where $g^{a,b}$ is the constant from (\ref{0.3}), and $\theta_0=-1$.
We also set
\begin{align}\label{DefTau}
\tau_*=\left(\sup \{-2^{-1}G(\theta) 
{\mathcal F}_+(s,\theta)\,;\,s \in \R, \ \theta \in S^2\,\} 
\right)^{-1}.
\end{align}
We can show that 
$\tau_*$ is a finite positive number 
when $G\not\equiv 0$ on $S^2$, and $\vec{f}\not\equiv 0$ on $\Omega$
(see Lemma~\ref{Triviality} below for the proof).
If $G\equiv 0$ on $S^2$, 
which is equivalent to the so-called {\it null condition}, 
then the global solvability is known for sufficiently small initial data
(see, e.g., Metcalfe, Nakamura and Sogge \cite{MetNaSo05b}, and the authors \cite{KatKub08MSJ});
we also have the global solution $u\equiv0$ if $\vec{f}\equiv 0$.

Our main result is the following:
\begin{theorem}\label{Th.1.1}
Assume that ${\mathcal O}$ is a non-trapping obstacle.
If $G\not\equiv 0$ on $S^2$,
then for any $\vec{f} \in (C^\infty_0(\Omega))^2$
with $\vec{f}\not\equiv 0$ we have
\begin{align}
\label{MainInf}
\liminf_{\ve \to +0} \ve \log T_\ve \ge \tau_*,
\end{align}
where $\tau_*$ is the number defined by \eqref{DefTau}.
\end{theorem}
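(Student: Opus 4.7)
The plan is to follow the scheme indicated in the introduction: build an approximate solution $\wu$ to the mixed problem \eqref{eq}--\eqref{id} that incorporates the leading nonlinear effect through a H\"ormander-type reduced ODE, then estimate the error $u-\wu$ by weighted energy methods, and close the argument by a continuity/bootstrap argument to conclude that $T_\ve\ge\exp((\tau_*-\delta)/\ve)$ for every small $\delta>0$ and every sufficiently small $\ve$; letting $\delta\to 0$ then yields \eqref{MainInf}.

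First I would construct $\wu$ piecewise. Near $\pa\Omega$ and for bounded $t$, I simply set $\wu=\ve u_0$, so that the Dirichlet condition is automatic. In the far region I use the ansatz $\wu(t,x)\sim \ve r^{-1}U(\ve\log r,\,r-t,\,\omega)$ with $r=|x|$ and $\omega=x/r$; substituting into \eqref{eq} and retaining only the leading $r^{-1}$ terms yields the reduced ODE \eqref{3.5}. Differentiating in the characteristic variable $s=r-t$ converts it into the Riccati equation
\begin{equation*}
\pa_\sigma V=-\tfrac12 G(\omega)V^2,\qquad V|_{\sigma=0}=-\mathcal{F}_+(s,\omega),
\end{equation*}
for $V=\pa_s U$ in the scaled time $\sigma=\ve\log r$. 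Because $V$ remains smooth precisely for $\sigma<\bigl[\sup(-\tfrac12 G(\omega)\mathcal{F}_+(s,\omega))\bigr]^{-1}=\tau_*$, the ODE provides a smooth $U$, and hence a smooth $\wu$, on any slab $\{\ve\log\jb{t}\le\tau_*-\delta\}$. The two pieces are glued together through a cutoff in $r/t$; the validity of this matching rests on the quantitative rate of convergence in \eqref{DefRadiation} established in \cite{kkInverse}.

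Next I would analyse the remainder $w:=u-\wu$. By construction $w$ satisfies $\dal w=F(\pa u)-F(\pa\wu)+R$ with vanishing initial and Dirichlet data, where the residue $R$ collects the subleading terms discarded in the ansatz and, in appropriate weighted norms, is an integrable-in-$t$ source on $\{\ve\log\jb{t}\le\tau_*-\delta\}$. The main analytical tools are weighted Keel--Smith--Sogge estimates together with the local energy decay available on non-trapping exteriors; they allow one to run a bootstrap on a sufficiently high-order weighted energy of $w$ and show that this energy stays $O(\ve^\alpha)$ for some $\alpha>0$ throughout the slab. Once $\pa w$ is controlled and $\pa\wu$ is bounded (as it is, strictly before the Riccati blow-up), the standard continuation criterion forces $T_\ve\ge\exp((\tau_*-\delta)/\ve)$ for all sufficiently small $\ve$.

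The main obstacle is the matching between the diffracted free solution and the ODE ansatz: the radiation profile $\mathcal{F}_+$ must faithfully capture all of the information that $\vec f$ transmits outward despite the reflections off $\pa\Omega$, and the error of this asymptotic identification has to decay fast enough in $t$ to be absorbed in the weighted energy bootstrap. Non-trapping is essential in two places here, since it underlies both the existence of $\mathcal{F}_+$ with the quantitative rate used above and the local energy decay that makes the bootstrap close. If these two ingredients are quantified sharply enough, the remainder of the argument parallels the treatment of the Cauchy problem by John \cite{Joh87} and H\"ormander \cite{Hoe87,Hoe97}.
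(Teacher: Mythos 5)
Your sketch reproduces the paper's overall strategy faithfully: an approximate solution that equals $\ve u_0$ for moderate times and transitions to the ODE-profile $\ve r^{-1}p(r-t,\omega,\tau)$ for large times, matched through the radiation field $\mathcal{F}_+$ from \eqref{DefRadiation} with the quantitative convergence rate of \cite{kkInverse}, followed by an error estimate and a continuity argument yielding $T_\ve\ge\exp(\tau_0/\ve)$ for each $\tau_0<\tau_*$. The genuine divergence is in the tool used to close the error bootstrap. You propose weighted Keel--Smith--Sogge space-time $L^2$ estimates; the paper instead relies on the weighted $L^\infty$--$L^\infty$ decay estimate of Proposition~\ref{main} (i.e.\ \eqref{ba4}), built from local energy decay via the cut-off decomposition in the Appendix. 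Concretely, the bootstrap hypothesis in Lemma~\ref{Lemma5.1} is the pointwise bound $\jb{x}\jb{t-|x|}|\pa u_2|_K\le M\ve$, not a high-order weighted energy bound; one then travels through Subsections~\ref{KEE1}--\ref{KEE2} (standard and generalized energies, with a nontrivial boundary term $J_m(t)$ controlled via the trace theorem and the Klainerman--Sobolev type inequality of Lemma~\ref{KlainermanSobolev}), and finally feeds those back into \eqref{ba4} to recover the pointwise bound with an improved constant $M\ve/2$. The pointwise route has the advantage that it handles the critical loss at $t\sim\exp(\tau_0/\ve)$ by tracking a clean $\jb{t-|x|}^{-1}$ weight, whereas a purely KSS-based argument gives sharp \emph{almost-global} existence (as in \cite{KeSmiSo02}) but does not by itself deliver the sharp coefficient $\tau_*$; either way one still needs the approximate solution. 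Two small slips in your write-up: the paper scales time as $\tau=\ve\log t$ rather than $\ve\log r$ (immaterial where $r\sim t$, but \eqref{AppCone} is genuinely in $\log t$), and the Riccati initial datum should be $V|_{\sigma=0}=+\mathcal{F}_+$, matching \eqref{3.6}; with the sign you wrote, the blow-up threshold would come out as $\bigl[\sup(+\tfrac12 G\mathcal{F}_+)\bigr]^{-1}$, which is not $\tau_*$.
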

As we have mentioned, analogous results were already 
obtained for the Cauchy problem (see \cite{Hoe87, Hoe97, Joh87}; more precisely
the quasi-linear case was treated in \cite{Hoe87} and \cite{Joh87},
while the semilinear case is also considered in \cite{Hoe97}).
When the initial data is radially symmetric, Godin \cite{God08}
obtained a similar result for quasi-linear equations
with the Neumann boundary condition.
In the case of the Cauchy problem, vector fields $t\pa_t+x\cdot\nabla_x$ and 
$t\pa_j+x_j\pa_t$ ($j=1,2,3$) were
effectively used to obtain the decay of the solution 
through Klainerman's inequality;
however these fields are not useful in the study of the mixed problem.
In our proof, to compensate the lack of those vector fields,
we use the weighted $L^\infty$-$L^\infty$ estimate
which involves only the standard derivatives and the generators of spatial rotations
(see Proposition~\ref{main} below);
careful treatments of the decay factor 
$(1+|t-|x|\,|)^{-1}$ are also needed.
Our method is also applicable to the case $\Omega=\R^3$.

Unfortunately we do not have the estimate in the opposite direction
to \eqref{MainInf}, that is to say
\begin{align}
\label{upperbound}
\limsup_{\ve \to +0} \ve \log T_\ve \le \tau_*
\end{align}
in general situations.
Note that the same is true for the semilinear Cauchy problem, 
though the estimate corresponding to
\eqref{upperbound} is obtained for the quasi-linear Cauchy problem
(see Alinhac~\cite{Ali01:02}). 
However the following result,  motivated by \cite{God93}, shows that we have \eqref{upperbound} for some special case.
\begin{theorem}\label{Upper}
Let ${\mathcal O}=\{x\in \R^3; |x|<1\}$, and $F=c(\pa_t u)^2$ with some constant $c\,(\ne 0)$.
We suppose that $f_0\equiv 0$, and $f_1\in C^\infty_0(\Omega)$.
If $f_1$ is non-zero and radially symmetric, and $cf_1$ is non-negative,
then we have \eqref{upperbound}. Consequently we have
$\lim_{\ve\to +0}\ve \log T_\ve=\tau_*$ \text{for such $\vec{f}=(f_0, f_1)$}.
\end{theorem}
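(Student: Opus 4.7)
The plan is to reduce to one space variable via radial symmetry and then extract a Riccati blow-up along a single outgoing characteristic, in the spirit of John and Godin~\cite{God93}. With $r=|x|$ and $v(t,r):=r\,u(t,r)$, problem \eqref{eq}--\eqref{id} becomes
\begin{equation*}
v_{tt}-v_{rr}=\frac{c}{r}\,v_t^{\,2}\ \text{ on }(0,T)\times(1,\infty),\quad v(t,1)=0,\quad v(0,r)=0,\quad v_t(0,r)=\ve\phi_1(r),
\end{equation*}
where $\phi_1(r):=rf_1(r)$. The corresponding free solution is given by d'Alembert with odd reflection of $\phi_1$ across $r=1$. Letting $t\to\infty$ along $r=s+t$ identifies the radiation field as $\mathcal F_+(s,\theta)=-\tfrac12\widetilde\phi_1(s)$, where $\widetilde\phi_1$ is that odd extension. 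Since $G(\theta)\equiv c$, this gives
\[
\tau_*\;=\;\frac{4}{c\,\sup_{r\ge 1} r f_1(r)}\;=\;\frac{4}{c\,\phi_1(s_*)}
\]
for some $s_*>1$ realising the supremum; the assumption $cf_1\ge 0$ with $f_1\not\equiv 0$ guarantees that the supremum is strictly positive.

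Introduce the Riemann invariants $p:=v_t+v_r$, $q:=v_t-v_r$, which initially coincide with $\ve\phi_1(r)$ and are governed by
\[
(\pa_t-\pa_r)p \;=\; (\pa_t+\pa_r)q \;=\; \frac{c}{4r}(p+q)^2.
\]
The outgoing ray $\Gamma_*:=\{(t,s_*+t)\,:\,t\ge 0\}$ stays in $\Omega$ and never meets $\pa\Omega$, so reflection does not enter the analysis on $\Gamma_*$. Setting $P(t):=p|_{\Gamma_*}$ and $Q(t):=q|_{\Gamma_*}$, we have
\[
Q'(t)=\frac{c\,\bigl(P(t)+Q(t)\bigr)^2}{4(s_*+t)},\qquad Q(0)=\ve\phi_1(s_*).
\]
Assuming $c>0$ (the other case is analogous), $Q'\ge 0$, hence $Q(t)\ge\ve\phi_1(s_*)$ as long as the solution exists. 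The key technical step is a uniform-in-$\ve$ estimate $|P(t)|\le C\ve\jb{t}^{-1}$ on $\Gamma_*$: the ingoing characteristic through $(t,s_*+t)$ meets $\{t=0\}$ at $r=s_*+2t$, which is outside $\supp\phi_1$ for $t$ larger than a fixed $t_0$, so on $\Gamma_*$ the incoming part $p$ is thereafter produced only by the nonlinear source; integrating $(\pa_t-\pa_r)p$ in the ingoing direction and using the $1/r$ weight, then closing a bootstrap, propagates this bound up to times of order $\exp(\tau_*/\ve)$.

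With $|P|/Q=O(\jb{t}^{-1})$, the $Q$-equation becomes the perturbed Riccati $Q'(t)\ge \frac{c}{4(s_*+t)}Q(t)^2\bigl(1-C\jb{t}^{-1}\bigr)$. Dividing by $Q^2$ and integrating from $0$ to $t$ yields
\[
\frac{1}{\ve\phi_1(s_*)}-\frac{1}{Q(t)}\;\ge\;\frac{c}{4}\log\!\frac{s_*+t}{s_*}\;-\;O(1),
\]
so $Q$ must blow up at a time $T_\ve(s_*)$ satisfying $\ve\log T_\ve(s_*)\le \tau_*+O(\ve)$. The classical solution of \eqref{eq}--\eqref{id} cannot be continued past $T_\ve(s_*)$, hence $T_\ve\le T_\ve(s_*)$ and $\limsup_{\ve\to 0}\ve\log T_\ve\le \tau_*$, which is \eqref{upperbound}; combined with Theorem~\ref{Th.1.1} one obtains $\lim_{\ve\to 0}\ve\log T_\ve=\tau_*$. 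The main obstacle is the uniform estimate for $P$: since $P$ and $Q$ have the same size at $t=0$, the separation $|P|=O(\ve\jb{t}^{-1})$ must be established after a bounded time and then propagated over the nearly exponential scale $\exp(\tau_*/\ve)$; this is the radial analogue of Godin's argument and is where the bulk of the rigorous work lies.
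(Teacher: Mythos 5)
Your reduction to one space dimension, the identification of $\mathcal F_+$ and $\tau_*$, and the idea of extracting a Riccati-type inequality along an outgoing ray are all in the spirit of the paper's argument; but the way you try to close the Riccati inequality is genuinely different, and it leaves a real gap.

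You split $v=ru$ into Riemann invariants $p=v_t+v_r$, $q=v_t-v_r$, and then you need $(P+Q)^2\ge (1-o(1))Q^2$ along $\Gamma_*$, which forces you to prove the uniform bound $|P(t)|\le C\ve\jb{t}^{-1}$ on $\Gamma_*$ all the way up to times of order $\exp(\tau_*/\ve)$. You flag this as ``the bulk of the rigorous work'' but do not do it, and it is not at all clear that it can be done by the bootstrap you sketch: $P$ is produced by integrating $\frac{c}{4r}(p+q)^2$ along the ingoing characteristic through $(t,s_*+t)$, and as $\sigma\to t$ that characteristic passes through points whose outgoing coordinate approaches $s_*$, i.e.\ precisely where $q$ is undergoing the Riccati blow-up. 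So the source for $P$ involves the blowing-up quantity itself, and a uniform $O(\ve\jb{t}^{-1})$ bound for $P$ near the blow-up time is not a routine consequence of the $1/r$ weight; it would require an argument comparable in difficulty to the lower-bound proof of Theorem~\ref{Th.1.1}. Moreover, near blow-up $P$ cannot actually remain $O(\ve\jb{t}^{-1})$ since $v_t=(p+q)/2$ blows up, so any such bootstrap would at best hold only up to a time that itself has to be shown to be $\le T_\ve$, and this self-referential step is left unaddressed.

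The paper avoids this difficulty entirely by working directly with $U(t,s)=(t+s)\,\pa_t u^*(t,t+s)$ and the explicit d'Alembert representation \eqref{RadExp01} (odd reflection across $r=1$). Because $c\ge 0$ and $f_1^*\ge 0$, every term on the right-hand side of \eqref{RadExp01} is nonnegative; one of the two nonlinear integrals, restricted to the ray $r-t=s$, is exactly $\frac{c}{2}\int_0^t(\sigma+s)^{-1}U^2(\sigma,s)\,d\sigma$, and the remaining terms (the other nonlinear integral and the $(s+2t)f_1^*(s+2t)$ contribution) are simply dropped because they are $\ge 0$. This gives the closed integral inequality $U(t,s)\ge V(t,s):=\frac12\ve sf_1^*(s)+\frac{c}{2}\int_0^t(\sigma+s)^{-1}U^2(\sigma,s)\,d\sigma$, from which $\pa_tV\ge\frac{c}{2}(t+s)^{-1}V^2$ and the blow-up time follow with no control of an incoming part whatsoever. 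In short: the sign condition $cf_1\ge 0$ is used in the paper not to control $p$, but to discard the nonnegative ``non-Riccati'' pieces of the explicit representation, and that one observation collapses the entire bootstrap you are proposing. If you want to salvage your route, either establish the $P$-bootstrap rigorously (which is substantially harder), or switch to the $\pa_tu^*$-based integral representation, which is what makes the proof short.
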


This paper is organized as follows.
In the next section we gather notation.
In Section~\ref{Pre} 
we give some preliminaries.
The approximate solution will be constructed in Section~\ref{Approx}.
Section~\ref{proof} is devoted to the proof of Theorem~\ref{Th.1.1}
which is reduced to Lemma \ref{Lemma5.1}.
Theorem~\ref{Upper} will be proved in Section~\ref{BlowUp}.

\medskip

As usual, various positive constants
which may change line by line are denoted just by the same letter $C$ 
throughout this paper.

\section{Notation}
\label{Notation}
In this section, we introduce notation which will be used throughout this paper.

We write $\pa_0=\pa_t$ and $\pa_j=\pa_{x_j}$ for $j=1,2,3$.
We denote $ r=|x| $ and $\omega={x}/{r}$. 
We set $\pa_r=\sum_{j=1}^3 (x_j/r) \partial_j $ and
$ O_{ij}=x_i\pa_j-x_j\pa_i$ for $i, j=1, 2, 3$.
Then we have
\begin{align}
\label{rew1}
\nabla_x=\,\omega \pa_r - r^{-1} \omega \wedge O 
\quad\text{with}\ O=(O_{23},O_{31},O_{12}).
\end{align}
We denote $Z=\{Z_0, Z_1, \ldots, Z_{6} \}
=\{\pa_t, \pa_1, \pa_2, \pa_3, O_{23}, O_{31}, O_{12} \}$. 
We write $Z^\alpha$ for $Z_0^{\alpha_0}\cdots Z_{6}^{\alpha_{6}}$
with a multi-index $\alpha=(\alpha_0,\ldots, \alpha_{6})$.
Note that we have $[Z_a,\dal]=0$~($a=0,\dots,6$),
where we have set $[A,B]=AB-BA$.
For a non-negative integer $s$ and a smooth function $\varphi$,
we define
\begin{align}\nonumber
 |\varphi(t,x)|_s=\sum_{|\alpha|\le s} |(Z^\alpha \varphi)(t,x)|,
\quad
 |\pa \varphi(t,x)|_s=\sum_{|\alpha|\le s} \sum_{a=0}^3 |(Z^\alpha \pa_a \varphi)(t,x)|.
\end{align}

For functions of $(s,\theta,\tau) \in \R \times S^2 \times [0,\infty)$,
we denote the differentiation with respect to $s$, $\theta$ and $\tau$ by 
\begin{align}\label{3.8}
\Lambda_0=\partial_s, \quad \Lambda_1=o_{23}, 
\quad \Lambda_2=o_{31}, \quad \Lambda_3=o_{12},
\quad \Lambda_4=\partial_\tau,
\end{align}
where a differential operator $o_{ij}$ on $S^2$ is (formally) defined by $o_{ij}=\theta_i \partial_{\theta_j}-\theta_j \partial_{\theta_i}$.
We write $\Lambda^\beta$ for $\Lambda_0^{\beta_0}\cdots \Lambda_{4}^{\beta_4}$
with a multi-index $\beta=(\beta_0,\ldots, \beta_4)$.
For a multi-index $\gamma=(\gamma_0,\ldots,\gamma_3)$ we define
$\Lambda_*^\gamma=\Lambda_0^{\gamma_0}\cdots \Lambda_3^{\gamma_3}$.

We write $\jb{z}=\sqrt{1+|z|^2}$ for $z\in \R^d$, where $d$ is a positive integer.
For $\nu$, $\kappa \in \R$, we define 
\begin{align}
\label{DefPsi}
{\Psi}_\nu(t)= &
   \begin{cases}
        \log(2+t)  & \text{ if } \nu=0,
\\
       1
         & \text{ if } \nu\not=0,
   \end{cases}
\\
\label{defW}
W_{\nu,\kappa}(t,x)= &
\langle t+|x|\rangle^\nu \bigl( \min \left\{ \jb{x}, \jb{t-|x|} \right\} \bigr)^\kappa.
\end{align}
We also define
\begin{equation}\label{NfW}
 \|h(t)\!:\!{N_k({\mathcal W})}\|
=\sup_{(s,x) \in [0,t] \times \Do}
\jbx\,{\mathcal W}(s,x)\,|h(s,x)|_k
\end{equation}
for $t\in [0,T]$, a non-negative integer $k$, and a non-negative
function $\mathcal{W}(s,x)$.
In addition, for $\rho\ge 0$ and a non-negative integer $k$, we define 
\begin{equation}
{\mathcal A}_{\rho, k}[\vec{f}\,]=\sup_{x\in \Do} \, \jbx^{\rho} 
\bigl(|f_0(x)|_{k}+|\nabla_x f_0(x)|_k+|f_1(x)|_k\bigr)
\label{HomWei}
\end{equation}
for a smooth function $\vec{f}=(f_0, f_1)$ on $\Do$.

For $R>0$, $B_R(y)$ stands for the open ball in $\R^3$ with radius $R$
centered at $y\in \R^3$.

\section{Preliminaries}
\label{Pre}
First we derive some estimates for ${\mathcal F}_+$
introduced by \eqref{DefRadiation}.
${\mathcal S}(\R^3)$ denotes the set of rapidly decreasing functions.
For $\vec{\varphi}=(\varphi_0, \varphi_1)\in\bigl({\mathcal S}(\R^3)\bigr)^2$, we define
the Friedlander radiation field 
$$
{\mathcal F}_0[\vec{\varphi}](s,\theta)=\frac{1}{{4\pi}} 
  \{{\mathcal R}[\varphi_1](s,\theta)-\pa_s {\mathcal R}[\varphi_0](s,\theta)\} 
$$
for $(s,\theta) \in \R \times S^2$, where
${\mathcal R}[\psi]$ denotes the Radon transform of $\psi$, i.e.,
$$
{\mathcal R}[\psi](s,\theta)= \int_{y \cdot \theta=s} \psi(y)\,dS_y.
$$
Here $dS_y$ denotes the area element on the plane 
$\{y \in \R^3;\,y \cdot \theta=s\}$.
\begin{lm}\label{Radiation}
Let $\mathcal O$ be non-trapping.
Suppose that $\vec{f}\in \bigl(C^\infty_0(\Omega)\bigr)^2$.
Let $u_0$ be the solution to the mixed problem \eqref{hw}--\eqref{idh}.
Then there exists $\vec{f}_+\in \bigl({\mathcal S}(\R^3)\bigr)^2$
such that we have the following:
For any non-negative integer $k$ and any $\nu>0$,
there exists a positive constant $C$ such that
\begin{align}
\label{1.0}
& \bigl|
u_0(t,x)-r^{-1}{\mathcal F}_0[\vec{f}_+](r-t, \omega)\bigr|_k
\\
&\quad {}+\sum_{a=0}^3 \bigl|
\pa_a u_0(t,x)-\omega_a r^{-1} (\pa_s {\mathcal F}_0[\vec{f}_+]) 
(r-t, \omega)\bigr|_k \nonumber\\
& \qquad \le C(1+t+r)^{-2}(1+|t-r|)^{-\nu}
\text{ for $(t,x)$ with $r\ge t/2\ge 1$,}
\nonumber
\end{align}
where we have put $r=|x|$, $\omega=(\omega_1, \omega_2, \omega_3)=r^{-1}x$,
and $\omega_0=-1$.

In particular, we have
\begin{equation}
\label{f-00}
{\mathcal F}_+(s,\theta)=\pa_s{\mathcal F}_0[\vec{f}_+](s,\theta)
\quad \text{ for $(s,\theta)\in \R\times S^2$},
\end{equation}
where ${\mathcal F}_+$ is given by \eqref{DefRadiation}.
If $\vec{f}$ satisfies
\begin{equation}
\label{SupportCond}
\vec{f}(x)=0 \text{ for $|x|\ge R$}
\end{equation}
with some $R>1$, then
we have
\begin{equation}
\label{SupportR}
{\mathcal F}_0[\vec{f}_+](s,\theta)=\mathcal{F}_+(s,\theta)=0 \quad \text{ for $s\ge R$ and $\theta\in S^2$.}
\end{equation}

Moreover, for any $N>0$ and any multi-index $\gamma$, 
there exists a positive constant $C$ such that
\begin{align}\label{f}
|\Lambda_*^\gamma{\mathcal F}_0[\vec{f}_+](s,\theta)|+|\Lambda_*^\gamma {\mathcal F}_+(s,\theta)|
\le C \jb{s}^{-N}
\quad \text{for}\ \ (s,\theta) \in \R \times S^2.
\end{align}
\end{lm}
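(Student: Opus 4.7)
The plan is to combine scattering theory for the exterior problem with the classical Friedlander radiation expansion for the free wave equation on $\R\times\R^3$. By the non-trapping hypothesis on $\mathcal{O}$, local energy decays exponentially fast (odd-dimensional Lax--Phillips theory), which allows one to define outgoing scattering data: there exists a unique $\vec f_+\in(\mathcal{S}(\R^3))^2$ such that the free solution $\tilde u_0$ to $\dal\tilde u_0=0$ on $\R\times\R^3$ with Cauchy data $\vec f_+$ asymptotically agrees with $u_0$. Quantitatively, $u_0-\tilde u_0$ together with its $Z^\alpha$-derivatives decays faster than any polynomial in $t$ on the annular region $t/2\le|x|\le t+R$, where $R$ is chosen so that $\supp\vec f\subset\{|x|\le R\}$; moreover, finite speed of propagation forces $u_0(t,x)=\tilde u_0(t,x)$ whenever $|x|\ge t+R$. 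This construction and its quantitative bounds are essentially the content of the authors' earlier paper \cite{kkInverse}.

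Granted this scattering input, for the free solution $\tilde u_0$ with Schwartz data the classical Friedlander expansion (derived from Kirchhoff's representation) yields, in the region $r\ge t/2\ge 1$ and for any $\nu>0$, $k\ge 0$,
\begin{align*}
&\bigl|\tilde u_0(t,x)-r^{-1}\mathcal{F}_0[\vec f_+](r-t,\omega)\bigr|_k \\
&\qquad{}+\sum_{a=0}^3 \bigl|\pa_a\tilde u_0(t,x)-\omega_a r^{-1}(\pa_s\mathcal{F}_0[\vec f_+])(r-t,\omega)\bigr|_k \\
&\qquad\qquad\le C(1+t+r)^{-2}(1+|t-r|)^{-\nu}.
\end{align*}
Writing $u_0=\tilde u_0+(u_0-\tilde u_0)$, the first summand is handled by the Friedlander estimate above, while the second is bounded by finite speed of propagation on $\{r\ge t+R\}$ (where $u_0\equiv\tilde u_0$) and by the scattering estimate on $\{t/2\le r\le t+R\}$, noting that in the latter region $(1+|t-r|)^{-\nu}\ge(1+R)^{-\nu}$, so that rapid decay in $t$ alone matches the target bound. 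This yields \eqref{1.0}.

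For \eqref{f-00}, substitute $r=t+s$ and $\omega=\theta$ in the $\pa_t$-part of \eqref{1.0}, multiply by $-t$, and let $t\to\infty$; since $\omega_0=-1$, the limit is exactly $\pa_s\mathcal{F}_0[\vec f_+](s,\theta)$. The support property \eqref{SupportR} follows from finite speed of propagation: $u_0(t,(s+t)\theta)=0$ for $s\ge R$, whence $\mathcal{F}_+\equiv 0$ on $\{s\ge R\}$ by \eqref{DefRadiation}, and then $\mathcal{F}_0[\vec f_+]\equiv 0$ on $\{s\ge R\}$ by integrating \eqref{f-00} from $s$ to $+\infty$ and using the Schwartz decay of $\mathcal{F}_0[\vec f_+]$ at $+\infty$. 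Finally, \eqref{f} is immediate from $\vec f_+\in(\mathcal{S}(\R^3))^2$ applied to the explicit Radon transform representation of $\mathcal{F}_0[\vec f_+]$, combined with \eqref{f-00}.

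The main obstacle is the scattering step: establishing existence of $\vec f_+\in(\mathcal{S}(\R^3))^2$ together with the quantitative estimate on $u_0-\tilde u_0$ (including high-order $Z^\alpha$-control) in the annular region $t/2\le|x|\le t+R$. This rests crucially on the non-trapping hypothesis and the odd space dimension (through exponential local energy decay) and is where the bulk of the technical work lies; it is the content of \cite{kkInverse}. The remaining pieces are routine Kirchhoff-formula expansions and finite-speed-of-propagation arguments.
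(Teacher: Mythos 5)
Your proof is essentially correct and follows the same overall route as the paper: the key estimate \eqref{1.0} is attributed to the scattering analysis in \cite{kkInverse}, and \eqref{f-00}, \eqref{SupportR}, \eqref{f} are then derived as consequences. The paper's proof of \eqref{SupportR} is slightly more direct — it multiplies \eqref{1.0} by $r$, sets $x=(t+s)\theta$ with $s\ge R$ (where $u_0$ and its derivatives vanish by finite propagation), and lets $t\to\infty$, yielding $\mathcal{F}_0[\vec f_+](s,\theta)=0$ and $\pa_s\mathcal{F}_0[\vec f_+](s,\theta)=0$ simultaneously — whereas you first obtain $\mathcal{F}_+\equiv 0$ on $\{s\ge R\}$ from \eqref{DefRadiation} and then integrate \eqref{f-00}; both are valid.

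One heuristic step in your sketch of \eqref{1.0} should be flagged: you assert that finite speed of propagation forces $u_0=\tilde u_0$ on $\{|x|\ge t+R\}$. What finite propagation actually gives is $u_0\equiv 0$ on $\{|x|\ge t+R\}$ (since $\vec f$ and $\mathcal{O}$ are contained in $\{|x|\le R\}$). The free solution $\tilde u_0$ with Schwartz data $\vec f_+$ need not vanish there, and showing that it (or its Friedlander approximant) does vanish for $r-t\ge R$ is essentially the support property \eqref{SupportR}, which in this lemma is a \emph{consequence} of \eqref{1.0} rather than an input to it — so this argument, taken literally, would be circular. Likewise, your inequality $(1+|t-r|)^{-\nu}\ge(1+R)^{-\nu}$ on $\{t/2\le r\le t+R\}$ is off (for $r$ near $t/2$ one has $|t-r|\approx t/2$); what saves the step is that $|t-r|\le Ct$ there, so rapid polynomial decay in $t$ from the scattering estimate still dominates. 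Since both you and the paper ultimately defer the full estimate \eqref{1.0} to \cite[Theorem 1.2]{kkInverse}, these imprecisions live only in the expository gloss, not in the proof proper.
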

\begin{proof}
The existence of $\vec{f}_+$ satisfying \eqref{1.0} follows from
Theorem~1.2 in \cite{kkInverse}, where a stronger estimate than \eqref{1.0}
is obtained.
From \eqref{DefRadiation} and \eqref{1.0}, we obtain
$$
{\mathcal F}_+(s,\theta)=\lim_{t\to\infty} \left.(-t)\pa_tu(t, r\theta)\right|_{r=s+t}=\pa_s{\mathcal F}_0[\vec{f}_+](s,\theta),
$$
which shows \eqref{f-00}.
By the property of finite propagation, \eqref{SupportCond} implies 
$u_0(t,x)=0$ for $|x|\ge t+R$. Hence, multiplying \eqref{1.0} by $r$,
putting $x=(t+s)\theta$ with $s\ge R$ and $\theta\in S^2$, and taking the limit
as $t\to\infty$, we obtain \eqref{SupportR}.

Since it holds
for any $\vec{\varphi} \in ({\mathcal S}(\R^3))^2$, any multi-index $\gamma$,
and any $N>0$ that
$$
|\Lambda_*^\gamma {\mathcal F}_0[\vec{\varphi}](s,\theta)|
\le C \jb{s}^{-N}
\quad \text{for}\ \ (s,\theta) \in \R \times S^2
$$
(see \cite[Lemma 4.2]{kkInverse}), \eqref{f} follows immediately from \eqref{f-00}.
This completes the proof.
\end{proof}
\begin{lm}\label{Triviality}
Let ${\mathcal F}_+(s,\theta)$ and $G$ be given by \eqref{DefRadiation}
and \eqref{a}, respectively. 
We put $A=\sup\{ -2^{-1}G(\theta){\mathcal F}_+(s,\theta); s\in \R,\theta\in S^2\}$.
Suppose that $G\not\equiv 0$ on $S^2$, and $\vec{f}\not\equiv 0$ on $\Omega$.
Then we have $0<A<\infty$. Consequently $\tau_*(=A^{-1})$ 
given by \eqref{DefTau} is a finite positive number.
\end{lm}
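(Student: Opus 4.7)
The plan is to establish the three inequalities $A<\infty$, $A\ge 0$, and $A>0$ in turn; the last is by far the hardest. The finiteness $A<\infty$ is immediate from continuity of $G$ on the compact set $S^2$ together with the boundedness of $\mathcal{F}_+$ on $\R\times S^2$ (which follows from \eqref{f}). For $A\ge 0$, fix any $\theta_0\in S^2$ and let $s\to+\infty$: then $\mathcal{F}_+(s,\theta_0)\to 0$ by \eqref{f}, so the supremum defining $A$ is at least $0$.

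For the strict inequality $A>0$ I would argue by contradiction. The assumption $A=0$ is equivalent to
\begin{equation*}
G(\theta)\,\mathcal{F}_+(s,\theta)\ge 0 \quad \text{for every } (s,\theta)\in\R\times S^2.
\end{equation*}
The key observation is that, by \eqref{f-00} together with the decay of $\mathcal{F}_0[\vec{f}_+]$ at $s=\pm\infty$ (contained in \eqref{f}), one has $\int_\R \mathcal{F}_+(s,\theta)\,ds = 0$ for every fixed $\theta\in S^2$. Combining this zero-mean identity with the one-sided sign condition gives $\mathcal{F}_+(\cdot,\theta)\equiv 0$ first on the open set $\{G>0\}\subset S^2$ (where $\mathcal{F}_+\ge 0$) and then on $\{G<0\}$ (where $\mathcal{F}_+\le 0$). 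Since $G$ is the restriction to $S^2$ of a non-zero polynomial of degree at most two, its zero set is a nowhere-dense real-analytic subset, so $\{G\ne 0\}$ is dense in $S^2$; continuity of $\mathcal{F}_+$ (from \eqref{f}) then propagates the vanishing to all of $\R\times S^2$.

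What remains, and this is the main obstacle, is to deduce $\vec{f}\equiv 0$ from $\mathcal{F}_+\equiv 0$. From $\pa_s\mathcal{F}_0[\vec{f}_+]\equiv 0$ and the decay of $\mathcal{F}_0[\vec{f}_+]$ at infinity one first obtains $\mathcal{F}_0[\vec{f}_+]\equiv 0$. Inserting this into \eqref{1.0} yields, for every $\nu>0$, an estimate of the form $|\pa u_0(t,x)|\le C_\nu (1+t+|x|)^{-2}(1+|t-|x|\,|)^{-\nu}$ in the wave zone $|x|\ge t/2\ge 1$, whence the portion of the energy of $u_0(t,\cdot)$ carried by $\{|x|\ge t/2\}$ tends to $0$ as $t\to\infty$. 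The complementary region $\{|x|\le t/2\}$ is handled by local energy decay for the non-trapping exterior problem. Hence the conserved energy of $u_0$ is zero, which forces $\nabla f_0\equiv 0$ and $f_1\equiv 0$; since $f_0\in C^\infty_0(\Omega)$, this gives $\vec{f}\equiv 0$, contradicting the hypothesis. This last step is essentially the Lax--Phillips statement that the outgoing translation representer is an isometry on the energy space for non-trapping exterior domains, and one could alternatively invoke that theory directly in place of the energy computation above.
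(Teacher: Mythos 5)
Your proof is correct and follows essentially the same route as the paper: the strict positivity of $A$ comes from combining the zero-mean identity $\int_\R \mathcal{F}_+(s,\theta)\,ds=0$ (the paper phrases this equivalently as monotonicity of $\mathcal{F}_0[\vec{f}_+]$ in $s$ together with decay at $\pm\infty$) with the density of $\{G\ne 0\}$ in $S^2$, and the implication $\mathcal{F}_+\equiv 0\Rightarrow\vec{f}\equiv 0$ rests on the Lax--Phillips isometry, which the paper simply cites. The one place where your sketch is looser than the paper is the intermediate energy computation: local energy decay as usually stated controls the energy in a \emph{fixed} bounded region, so the growing annulus $\{R\le|x|\le t/2\}$ would need a separate argument (e.g.\ the pointwise decay \eqref{ba3} gives $|\pa u_0|\le C\jb{t}^{-1-\rho}$ there, which suffices); but since you yourself observe that one may invoke the Lax--Phillips theory directly in place of that computation, as the paper does, this does not affect the soundness of the approach.
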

\begin{proof}
Let $\vec{f}_+=(f_{0,+}, f_{1,+})\in \bigl({\mathcal S}(\R^3)\bigr)^2$ 
be from Lemma~\ref{Radiation}.
Then, in view of \eqref{f-00}, we have
$$
2\norm{{\mathcal F}_+}{L^2(\R\times S^2)}^2 
=\norm{\nabla_x f_{0, +}}{L^2(\R^3)}^2+\norm{f_{1, +}}{L^2(\R^3)}^2
$$
(see Lax-Phillips~\cite{LaPh}). Since $\vec{f}_+$ is the scattering data for 
the Dirichlet problem \eqref{hw}--\eqref{idh} (see \cite[Theorem 1.1]{kkInverse}), we also have
$$
\norm{\nabla_x f_{0,+}}{L^2(\R^3)}^2+\norm{f_{1,+}}{L^2(\R^3)}^2=\norm{\nabla_x f_0}{L^2(\Omega)}^2+\norm{f_1}{L^2(\Omega)}^2.
$$
Therefore we find that ${\mathcal F}_+\equiv 0$ if and only if $\vec{f}=(f_0,f_1)\equiv 0$.

It is trivial to show $0\le A<\infty$ because of \eqref{f}.
We shall show that if $A=0$, then either $G \equiv 0$ or $\vec{f} \equiv 0$ holds.
We set
$$
X=\{\theta\in S^2;~G(\theta)=0\}, \quad
Y=\{\theta\in S^2;~{\mathcal F}_+(s,\theta)=0 \ \mbox{for all}\ s\in \R \},
$$
which are closed sets because of the continuity of $G(\theta)$ and ${\mathcal F}_+(s,\theta)$.
Now our goal is to show that if $A=0$ then we have either $X=S^2$ or $Y=S^2$
(note that $Y=S^2$ leads to $\vec{f}\equiv 0$);
it suffices to prove that the assumptions $A=0$ and $X\ne S^2$ imply $Y=S^2$.

We assume $A=0$ and $X\ne S^2$ from now on. First we prove that 
\begin{equation}
\label{John-00}
 S^2\setminus X\subset Y.
\end{equation}
Indeed, let ${\theta}^0\in S^2\setminus X$. Since $A=0$ implies $G(\theta){\mathcal F}_+(s,\theta)\ge 0$ for any $(s,\theta)\in \R\times S^2$, and since $G({\theta}^0)\ne 0$,
we find that ${\mathcal F}_+(s,{\theta}^0)$ has the same sign for all $s\in \R$. Hence $\mathcal{F}_0[\vec{f}_+](s, {\theta}^0)$
is (weakly) monotone in $s$ (cf.~\eqref{f-00}), and \eqref{f} implies that $\mathcal{F}_0[\vec{f}_+](s, {\theta}^0)=0$ for all $s\in \R$;
consequently we get $\theta^0\in Y$ in view of \eqref{f-00}.

If $X=\emptyset$, then $Y=S^2$ because of \eqref{John-00}, and we are done. 
Suppose that $X\ne \emptyset$. 
Since $G$ is a polynomial of degree $2$ and $G\not\equiv 0$ on $S^2$,
it is easy to see that $X$ has no interior point; hence, recalling that
$X$ is closed, we get $X=\pa X$. Thus for
any $\theta^0 \in X$ we can take a sequence $\{\theta^{(k)}\}_{k\in \N}\subset S^2\setminus X
(\subset Y)$ such that $\lim_{k\to\infty}\theta^{(k)}=\theta^0$; since $\{\theta^{(k)}\}\subset Y$ and $Y$ is closed, we see that $\theta^0\in Y$. Now we have proved $X\subset Y$, which,
together with \eqref{John-00}, shows $Y=S^2$.
This completes the proof.
\end{proof}

Next we describe basic estimates for the wave equation from \cite{KatKub08MSJ},
and their variant; more general estimates under more general situations are available, however 
we restrict our attention only to the estimates which will be used in this paper.
Recall the definitions \eqref{DefPsi}--\eqref{HomWei}.
\begin{prop}\label{main}
Assume that $\mathcal O$ is non-trapping.
Let $k$ be a non-negative integer.

\noindent
{\rm (1)} 
Let $\rho>0$, and let $u_0$ be the solution to \eqref{hw}--\eqref{idh}.
Suppose that $\vec{f}\in \bigl(C^\infty_0(\Omega)\bigr)^2$. 
Then we have 
\begin{align}\label{ba3}
     & \langle t+|x| \rangle \jb{t-|x|}^\rho 
|u_0(t,x)|_k
\le C {\mathcal A}_{\rho+2,k+3}[\vec{f}\,]
\end{align}
for $(t,x)\in [0,T)\times \overline{\Do}$, where $C$ is a positive
constant depending on $\rho$ and $k$.
\medskip\\
{\rm (2)}
Let $\rho\ge 1$, and let $u$ be the solution to 
\begin{equation}
\label{MP}
\begin{cases}
\dal u(t,x)= h(t,x) & \text{for $(t,x) \in (0,T)\times \Omega$,}\\
u(t,x)=0 & \text{for $(t,x) \in (0,T)\times \partial\Omega$,}\\
\bigl(u(0,x), (\partial_t u)(0,x)\bigr)=\vec{f}(x) & \text{for $x\in \Omega$.}
\end{cases}
\end{equation}
Suppose that $(\vec{f}, h)$ satisfies the compatibility condition
of infinite order.
We also suppose that $h(t,x)=h_1(t,x)+h_2(t,x)$.
Then, for $\mu_1, \mu_2\ge 0$,
we have
\begin{align}\label{ba4}
 & \jbx \langle t -|x| \rangle^{\rho} |\partial u(t,x)|_k
\\ \nonumber
& \quad \le C {\mathcal A}_{\rho+2,k+4}[\vec{f}\,]+C\sum_{j=1}^2 
  \Psi_{\mu_j}(t) \|h_j(t)\!:\!{N_{k+4}(W_{\rho+\mu_j,1-\mu_j})}\|
\end{align}
for $(t,x)\in [0,T)\times \overline{\Do}$.

\end{prop}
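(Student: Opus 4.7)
My plan is to derive both estimates from the basic weighted $L^\infty$-$L^\infty$ bounds for the Dirichlet problem in the exterior of a non-trapping obstacle proved in \cite{KatKub08MSJ}, using linearity of the mixed problem to accommodate the splitting $h=h_1+h_2$ in part (2).

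For part (1), I would note that \eqref{ba3} is the weighted pointwise decay bound for the homogeneous exterior Dirichlet problem already established in \cite{KatKub08MSJ}. Its derivation there combines the standard pointwise decay for the free wave equation on $\R^3$ (obtained by a $Z$-vector-field method restricted to $\pa_t$, $\nabla_x$, and the spatial rotations $O_{ij}$, i.e., without using scaling or Lorentz boosts, which are unavailable in the exterior setting) with local energy decay for non-trapping exteriors via a standard cutoff. I would simply invoke that result, after checking that the data norm $\mathcal{A}_{\rho+2,k+3}[\vec{f}\,]$ controls the corresponding quantities there.

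For part (2), the main step is to recall from \cite{KatKub08MSJ} the inhomogeneous single-source analogue: for any $\mu\ge 0$,
\begin{equation*}
\jbx \langle t-|x| \rangle^{\rho} |\pa u(t,x)|_k \le C\mathcal{A}_{\rho+2,k+4}[\vec{f}\,] + C\Psi_\mu(t) \|h(t)\!:\!N_{k+4}(W_{\rho+\mu,1-\mu})\|.
\end{equation*}
The parameter $\mu$ trades a logarithmic factor $\log(2+t)$ appearing when $\mu=0$ against an improved weight $\langle t+|x|\rangle^{\rho+\mu}$ at the cost of the transverse decay; this flexibility is precisely what allows the two source terms $h_1,h_2$ in \eqref{ba4} to be treated with different weights. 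To produce the double-source version I would invoke linearity: write $u=u^{(0)}+u^{(1)}+u^{(2)}$, where $u^{(0)}$ solves the homogeneous problem with data $\vec{f}$, and $u^{(j)}$ ($j=1,2$) solves the inhomogeneous mixed problem with zero data and source $h_j$. Applying the single-source estimate to each piece (with $\mu=\mu_j$ for $u^{(j)}$, and with $h\equiv 0$ for $u^{(0)}$, so only the data term survives there) and summing yields \eqref{ba4}.

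The main obstacle I anticipate is the compatibility condition for the decomposed problems: the split pairs $(0,h_j)$ need not satisfy infinite-order compatibility at the boundary even though the original pair $(\vec{f},h)$ does. I would resolve this by a standard approximation argument: smoothly truncate $h_j$ in a neighborhood of $t=0$ to obtain compatible data $(0, h_j^{(\delta)})$, derive \eqref{ba4} in that smooth setting, and pass to the limit as $\delta\to 0$, using the fact that the constant $C$ depends only on $\rho$, $k$, and the obstacle, not on $\vec{f}$ or $h$. Once compatibility is dispatched, the proof reduces to a direct invocation of the estimates from \cite{KatKub08MSJ} together with the linearity decomposition.
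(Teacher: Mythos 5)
Your approach for part~(2) --- decompose $u=u^{(0)}+u^{(1)}+u^{(2)}$ by linearity and apply a single-source estimate to each piece --- is genuinely different from what the paper does, and it runs into a gap that your proposed fix does not obviously repair.

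The paper's actual proof (in the Appendix) never splits the source at the level of the exterior problem. Instead it uses a cut-off decomposition
\begin{equation*}
S[\Xi]=\psi_1 S_0[\psi_2\Xi]+\sum_{i=1}^4 S_i[\Xi],
\end{equation*}
reducing to the \emph{free-space} Cauchy problem $S_0$, and the splitting $h=h_1+h_2$ is introduced only there, writing $\zeta_0=K_0[\psi_2\vec f\,]+L_0[\psi_2 h_1]+L_0[\psi_2 h_2]$ and applying the free-space $L^\infty$--$L^\infty$ estimate of Lemma~A.1 to each $L_0[\psi_2 h_j]$ with its own parameter $\mu_j$. Since there is no boundary for the free Cauchy problem, no compatibility condition is required for the split pieces; the correction terms $S_1,\dots,S_4$, which do see the boundary, are controlled in bulk through local energy decay (Lemma~A.2) and never require splitting the source. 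This is precisely how the paper sidesteps the issue you correctly identified.

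The gap in your workaround is the following. If you smoothly truncate $h_j$ near $t=0$, say $h_j^{(\delta)}(t,x)=\varphi(t/\delta)\,h_j(t,x)$, then the norm $\|h_j^{(\delta)}(t):N_{k+4}(W_{\rho+\mu_j,1-\mu_j})\|$ involves up to $k+4$ applications of $Z$, and $Z$ includes $\partial_t$; each $\partial_t$ hitting $\varphi(t/\delta)$ produces a factor $\delta^{-1}$, so the norms blow up as $\delta\to0$ and you cannot pass to the limit with a uniform constant. A cleaner correction (subtract from $h_1$ a compactly supported $\tilde h_1$ matching the offending boundary Taylor coefficients and add it back to $h_2$, so that the total source is unchanged and both pieces become compatible) avoids the $\delta$-blowup, but then you must show $\|\tilde h_1:N_{k+4}(\cdot)\|$ is controlled by $\sum_j\|h_j:N_{k+4}(\cdot)\|$, which is not automatic because the compatibility defect is a trace quantity and does not obviously embed in the weighted $L^\infty$ norm you are allowed to use. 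A further concern is your black-box invocation of a single-source exterior estimate from \cite{KatKub08MSJ} with arbitrary $\mu\ge0$: the paper explicitly calls \eqref{ba4} a ``variant'' of (4.8) there ``which can be proved similarly,'' and supplies a full Appendix proof, suggesting the needed flexibility in $\mu$ for the exterior problem is not available off the shelf. For part~(1), your answer matches the paper: \eqref{ba3} is cited directly as (4.7) of \cite[Theorem~4.2]{KatKub08MSJ}.
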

The estimate \eqref{ba3} above is just the special case of (4.7) in \cite[Theorem~4.2]{KatKub08MSJ}.
\eqref{ba4} is a variant of (4.8) in \cite[Theorem~4.2]{KatKub08MSJ} and can be proved similarly;
we will give its proof in the Appendix below.
\begin{remark}\label{Two} \normalfont
Here we recall the definition of the compatibility condition:
We say that
$(\vec{f}, h)$ satisfies the compatibility condition of infinite order if
$u^{(j)}(x)\equiv 0$ for any $x\in \pa \Do$ 
and any $j\ge 0$, where $u^{(j)}$ is defined inductively by
$u^{(j)}(x)=\Delta u^{(j-2)}(x)+(\pa_t^{j-2}h)(0,x)$ for $j\ge 2$ with $u^{(j)}(x)=f_j(x)$ for $j=0, 1$.
Observe that we have $(\pa_t^ju_0)(0,x)=u^{(j)}(x)$
for smooth function $u_0$ satisfying \eqref{MP}.
It is easy to see that if $\vec{f}\in \bigl(C^\infty_0(\Do)\bigr)^2$,
then $(\vec{f},0)$ satisfies the compatibility condition
(and it is what we actually need in order to obtain
Lemma~\ref{Radiation} and \eqref{ba3}).
Hence, for 
$\vec{f}\in \bigl(C^\infty_0(\Do)\bigr)^2$,
$(\vec{f},h)$ satisfies the compatibility condition if and only if 
$\bigl(0, h\bigr)$ satisfies the compatibility condition. 

\end{remark}

Next we introduce the well-known elliptic estimate
(for the proof, see, e.g., \cite[Appendix~A]{KatKub08MSJ}).

\begin{lm}\label{elliptic}
Let $\varphi \in H^m(\Do) \cap H_0^1(\Do)$ 
for some integer $m\,(\ge 2)$. 
Then we have
\begin{equation}\label{ap10}
\sum_{|\alpha|=m} \norm{\partial_x^\alpha \varphi}{L^2(\Do)} \le
C(\|\Delta_{x} \varphi\!:\!{H^{m-2}(\Do)}\|+\|\nabla_x \varphi\!:\!{L^2(\Do)}\|).
\end{equation}
\end{lm}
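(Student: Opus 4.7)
The plan is to reduce to a local analysis via a partition of unity, establish the base case $m=2$ by hand, and then induct on $m$. Since $\pa\Do$ is compact and smooth, I would fix a finite collection of charts $\{(U_j,\Phi_j)\}_{j=1}^N$ covering a neighborhood of $\pa\Do$ in $\R^3$, each $\Phi_j$ a smooth diffeomorphism sending $U_j\cap\Do$ onto $V_j\cap\{y_3>0\}$ and $U_j\cap\pa\Do$ onto $V_j\cap\{y_3=0\}$. Together with an interior chart $U_0$ covering the rest of $\Do$, I choose a subordinate partition of unity $\{\chi_j\}_{j=0}^N$ and reduce the estimate for $\varphi$ to analogous estimates for each $\chi_j\varphi$.

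For the interior piece $\chi_0\varphi$, extension by zero gives a function in $H^m(\R^3)$; in the base case $m=2$ the Plancherel identity $\sum_{|\alpha|=2}\|\pa^\alpha u\|_{L^2(\R^3)}^2=\|\Delta u\|_{L^2(\R^3)}^2$ applied to $u=\chi_0\varphi$, together with the expansion $\Delta(\chi_0\varphi)=\chi_0\Delta\varphi+2\nabla\chi_0\cd\nabla\varphi+(\Delta\chi_0)\varphi$, yields the desired bound; the residual $\|(\Delta\chi_0)\varphi\|_{L^2}$ is absorbed by $\|\nabla\varphi\|_{L^2(\Do)}$ via a Hardy or local Poincar\'e inequality (applicable since $\varphi\in H^1_0(\Do)$). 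For a boundary piece $\chi_j\varphi$, pulling back by $\Phi_j$ produces $\wu$ on a half-ball that vanishes on $\{y_3=0\}$ and satisfies a uniformly elliptic equation $L\wu=\widetilde h$ whose right-hand side is controlled in $L^2$ by $\|\Delta\varphi\|_{L^2(\Do)}$ plus lower-order terms. I then invoke Nirenberg's tangential difference quotient method: since $D_k^h\wu$ for $k=1,2$ still vanishes on $\{y_3=0\}$, the energy estimate for $L$ gives uniform $H^1$ bounds on $D_k^h\wu$, hence $L^2$ bounds on mixed second derivatives $\pa_{y_k}\pa_{y_\ell}\wu$ with at most one index equal to $3$; the purely normal $\pa_{y_3}^2\wu$ is then recovered algebraically from $L\wu=\widetilde h$ by uniform ellipticity.

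For the induction step from $m-1$ to $m$, tangential derivatives of $\wu$ again vanish on $\{y_3=0\}$ and satisfy elliptic equations of the same form, so applying the Nirenberg argument to $D_k^h\pa^\gamma\wu$ for multi-indices $\gamma$ with at most $m-2$ indices equal to $3$ controls all $m$-th order mixed derivatives with at most $m-1$ normal indices; repeated algebraic use of the equation then yields the purely normal derivative of top order. The main obstacle is the bookkeeping: the commutators generated when $\chi_j$ and $\Phi_j^*$ are pushed past $\Delta$, together with the lower-order coefficients of $L$, produce a cascade of terms of order up to $m-1$, which must be absorbed either into the top-order seminorm on the left via standard interpolation or into $\|\nabla\varphi\|_{L^2(\Do)}$ on the right. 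Organizing this cleanly — and, in particular, verifying that no spurious $\|\varphi\|_{L^2}$ ever needs to appear on the right — is the only genuine work beyond the base case.
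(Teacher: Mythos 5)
The paper does not reproduce a proof of this lemma — it simply refers to Appendix~A of the authors' earlier paper \cite{KatKub08MSJ} — so your proposal can only be judged on its own merits, and on those it is essentially correct. Your plan is the standard elliptic-regularity argument built from scratch: partition of unity subordinate to boundary charts plus one unbounded ``interior'' piece; a Fourier-side estimate for the interior piece; boundary flattening, tangential difference quotients, and algebraic recovery of the purely normal derivative from the equation for the boundary pieces; induction on $m$; and a Friedrichs/Hardy-type inequality (valid for $H_0^1$ functions on a bounded portion of $\Omega$, because they vanish on $\partial\Omega$, which has positive surface measure) to absorb all $\|\varphi\|_{L^2}$-type remainders into $\|\nabla_x\varphi\|_{L^2(\Omega)}$. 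You are right that this last absorption is the only genuinely delicate bookkeeping point, and your sketch of it is adequate.

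Two small remarks. First, the ``Plancherel identity'' $\sum_{|\alpha|=2}\|\partial^\alpha u\|_{L^2(\R^3)}^2=\|\Delta u\|_{L^2(\R^3)}^2$ is an identity only if the sum runs over ordered pairs $\partial_j\partial_k$; over multi-indices $|\alpha|=2$ it is merely the inequality $\sum_{|\alpha|=2}\|\partial^\alpha u\|_{L^2}^2\le\|\Delta u\|_{L^2}^2$. That is all you need, so this is cosmetic. Second, the source the paper cites almost certainly takes the more economical ``black-box'' version of your argument: cut $\varphi$ into a piece supported in a bounded set $\Omega_R=\Omega\cap B_R(0)$ (vanishing on all of $\partial\Omega_R$) and a piece supported away from $\partial\Omega$; apply the known Dirichlet elliptic estimate on the bounded domain $\Omega_R$ to the first piece and the Fourier estimate on $\R^3$ to the second, then absorb the resulting $\|\varphi\|_{L^2(\Omega_R)}$ via the same Friedrichs inequality. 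That route avoids re-deriving the half-space a priori estimate via difference quotients, but the two proofs are morally the same, and yours is a valid, self-contained alternative.
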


In order to associate decay estimates with the energy estimate,
we use the following variant of the Sobolev type inequality
(for the proof, see, e.g., \cite[Appendix C]{KatKub08MSJ}).

\begin{lm}\label{KlainermanSobolev}\
There exists a positive constant $C$
such that 
\begin{equation}\label{ap21}
\sup_{x \in \Do}\, \jb{x} |\varphi(x)|
\le C \sum_{|\alpha| \le 2} 
\norm{Z^\alpha \varphi}{L^2(\Do)}
\end{equation}
for any $C^2(\overline{\Do})$-function $\varphi$
vanishing outside some bounded set.
\end{lm}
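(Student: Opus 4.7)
The plan is to split the argument by the size of $|x_0|$ and, in the far field, exploit the fact that the rotation fields $O_{ij}$, although not true derivatives, effectively replace the two tangential spatial derivatives once one rescales to spherical coordinates.

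For $|x_0|\le 2$ the weight $\jb{x_0}$ is bounded by a universal constant, so it suffices to show $|\varphi(x_0)|\le C\sum_{|\alpha|\le 2}\|Z^\alpha\varphi\|_{L^2(\Do)}$. I would apply the standard $H^2\hookrightarrow L^\infty$ Sobolev embedding on the bounded Lipschitz domain $\Do\cap B_3(0)$, using that $\pa_t\varphi\equiv 0$ (since $\varphi$ depends only on $x$), so that the relevant operators $\pa_i$ and $\pa_i\pa_j$ all sit inside the family $\{Z^\alpha:|\alpha|\le 2\}$; a standard Stein-type extension handles the boundary of $\Do\cap B_3(0)$.

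For $|x_0|=r_0>2$, I would pick a cutoff $\chi(r)$ supported in $[r_0/2,2r_0]$ with $\chi(r_0)=1$ and $|\chi^{(k)}(r)|\le Cr_0^{-k}$ for $k\le 2$, and set $g(r,\omega)=r\chi(r)\varphi(r\omega)$ on the product manifold $D=(r_0/2,2r_0)\times S^2$ equipped with the product metric $dr^2+d\omega^2$. Since $g(r_0,\omega_0)=r_0\varphi(r_0\omega_0)$, the task reduces to bounding $|g(r_0,\omega_0)|$. I would then invoke the 3D Sobolev embedding $H^2\hookrightarrow L^\infty$ on the product-metric unit ball $B_1(r_0,\omega_0)\subset D$: for $r_0>2$ this ball lies in the shell $\{r_0/2<|x|<2r_0\}\subset\Do$, and its product-metric geometry is uniformly equivalent to that of a Euclidean unit ball in $\R^3$, so the Sobolev constant is independent of $r_0$. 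This yields
\[
r_0^2|\varphi(r_0\omega_0)|^2=|g(r_0,\omega_0)|^2\le C\sum_{j+|\beta|\le 2}\|\pa_r^jO^\beta g\|_{L^2(B_1,\,dr\,d\omega)}^2.
\]

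The remaining computation is to translate these $L^2(dr\,d\omega)$-norms of $\pa_r^jO^\beta g$ into $\|Z^\alpha\varphi\|_{L^2(\Do)}$. Expanding by Leibniz, one uses $Og=r\chi\,O\varphi$ (the fields $O_{ij}$ are purely tangential and do not see $\chi$) and $\pa_r=\omega\cd\nabla$, so that $|\pa_r\varphi|\le|\nabla\varphi|$ and every $\pa_r$ that falls on $\chi$ costs only a harmless factor $r_0^{-1}$. The volume identity $dx=r^2\,dr\,d\omega$, together with $r\sim r_0$ on $\supp\chi$, converts the resulting $dr\,d\omega$-integrals into honest $dx$-integrals over the shell, and the commutator relation $[\pa_i,O_{jk}]=\delta_{ij}\pa_k-\delta_{ik}\pa_j$ rearranges every mixed term $\pa_iO_{jk}\varphi$ into a linear combination of $Z^\alpha\varphi$ with $|\alpha|\le 2$. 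Summing gives $r_0^2|\varphi(r_0\omega_0)|^2\le C\sum_{|\alpha|\le 2}\|Z^\alpha\varphi\|_{L^2(\Do)}^2$, which, since $\jb{x_0}\sim r_0$ on this range, is the desired inequality.

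The principal obstacle is maintaining the uniformity of the Sobolev constant in $r_0$: a direct $H^2\hookrightarrow L^\infty$ embedding on the elongated domain $(r_0/2,2r_0)\times S^2$ would deteriorate as $r_0\to\infty$ because the diameter grows, so localization to a unit-scale ball whose product-metric geometry is stable is essential. Once that uniformity is in hand, the near-field and far-field estimates combine to give $\sup_{x\in\Do}\jb{x}|\varphi(x)|\le C\sum_{|\alpha|\le 2}\|Z^\alpha\varphi\|_{L^2(\Do)}$.
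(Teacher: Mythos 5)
The paper does not prove Lemma~\ref{KlainermanSobolev} in-house; it simply cites \cite[Appendix~C]{KatKub08MSJ}, so there is no internal argument against which to compare line by line. That said, your proof is correct, and its structure---cut off near the obstacle and handle the bounded region by the ordinary $H^2\hookrightarrow L^\infty$ embedding; in the far field pass to the weighted function $g(r,\omega)=r\chi(r)\varphi(r\omega)$ on the product cylinder and use a unit-scale Sobolev embedding whose constant is uniform in $r_0$---is the standard one for Klainerman--Sobolev inequalities in exterior domains. All the mechanical steps are in order: the unit product-metric ball around $(r_0,\omega_0)$ sits in the shell $\{r_0/2<|x|<2r_0\}\subset\Do$ because ${\mathcal O}\subset B_1(0)$ and $r_0>2$; the volume element $dx=r^2\,dr\,d\omega$ exactly absorbs the factor $r$ in $g$; $[\pa_r,O_{ij}]=0$ and $[\pa_i,O_{jk}]=\delta_{ij}\pa_k-\delta_{ik}\pa_j$ redistribute mixed derivatives into the admissible family; and the decay $|\chi^{(k)}|\le Cr_0^{-k}$ makes every term where $\pa_r$ hits the cutoff harmless. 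A plausible alternative (and arguably leaner) route, which is closer to what typically appears in such appendices, is to first apply the trace-type estimate $\sup_{\omega}|h(\omega)|^2\le C\sum_{|\beta|\le2}\|O^\beta h\|^2_{L^2(S^2)}$ and then a one-dimensional Sobolev/fundamental-theorem argument in $r$, thereby avoiding any discussion of Riemannian $H^2$ norms on the product. Your version instead invokes the full 3D embedding on a product-metric ball; this works, but it implicitly uses that the pointwise covariant Hessian $|\nabla^2_{S^2}g|$ is controlled by $\sum_{|\beta|\le2}|O^\beta g|$ with a uniform constant (a true fact, by rotational homogeneity of $S^2$, but worth stating explicitly since the $O_{ij}$ are an overdetermined, non-commuting spanning family and the $H^2$ norm on the product metric involves the intrinsic Christoffel symbols of $S^2$).
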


\section{Approximate solutions}
\label{Approx}
This section is the core of the present
paper, namely, we shall construct an approximate solution
(see Proposition \ref{Lemma3.3} below).


Let $u_0$ be the solution to \eqref{hw}--\eqref{idh}.
Throughout this section we assume that \eqref{SupportCond} holds
for some $R>1$ (recall that we have assumed ${\mathcal O}\subset B_1(0)$).
Let ${\mathcal F}_+(s,\theta)$
be defined by \eqref{DefRadiation}, and 
let $P(s,\theta,\tau)$ be the unique solution of
\begin{align}\label{3.5}
& 2\partial_\tau P(s,\theta, \tau)=
    -G(\theta) P^2(s,\theta,\tau)
    \quad \text{for $s \in {\R}$, $\theta \in S^2$, $0 \le \tau < \tau_*$},
\\ \label{3.6}
& 
P(s,\theta,0)={\mathcal F}_+(s,\theta) \quad \text{for $s\in \R$, $\theta\in S^2$},
\end{align}
where $G(\theta)$ and $\tau_*$ are defined by \eqref{a} and \eqref{DefTau}, respectively.
Then we have
\begin{equation}
\label{Expression01}
P(s,\theta, \tau)=\frac{{\mathcal F}_+(s, \theta)}{1+2^{-1}G(\theta)
{\mathcal F}_+(s,\theta)\,\tau}
\end{equation}
for $s\in \R$, $\theta\in S^2$, and $0\le \tau<\tau_*$.
Observe that we have $1+2^{-1}G(\theta){\mathcal F}_+(s,\theta)\tau\ge 1-\tau/\tau_*$ 
for $0\le \tau<\tau_*$.
We define $p$ by
\begin{equation}
\label{Expression02}
p(s, \theta, \tau)
=-\int_{s}^\infty \frac{{\mathcal F}_+(s', \theta)}{1+2^{-1}G(\theta)
{\mathcal F}_+(s',\theta)\,\tau}ds'
\end{equation}
for $s\in \R$, $\theta\in S^2$, and $0\le \tau<\tau_*$,
so that we have 
\begin{equation}
\label{DerivP}
\pa_s p(s,\theta, \tau)=P(s,\theta, \tau).
\end{equation}
Note that the right-hand side of \eqref{Expression02} is finite because of
\eqref{f} with $N>1$. From \eqref{SupportR} we also get
\begin{equation}
\label{SupportP}
p(s,\theta,\tau)=\pa_s p(s,\theta,\tau)=0\quad \text{ for $s\ge R$.}
\end{equation}
By \eqref{Expression02}, \eqref{f-00}, and \eqref{SupportR}, we obtain
\begin{equation}
p(s,\theta,0)={\mathcal F}_0[\vec{f}_+](s,\theta)\quad \text{ for $(s,\theta)\in \R\times S^2$,}
\end{equation}
where $\vec{f}_+$ is from Lemma~\ref{Radiation}.
\begin{lm}\label{Lemma3.2}
Assume that \eqref{SupportCond} holds for some $R>1$.
Let $0<\tau_0<\tau_*$. 
Then 
for any $N>0$, and for any multi-indices 
$\beta=(\beta_0,\dots,\beta_4)$ and $\gamma=(\gamma_0, \dots, \gamma_3)$,
there exists a positive constant $C=C(\tau_0, \beta, \gamma, N)$ such that
\begin{align}
& | \Lambda^\beta p(s,\theta,\tau)| \le C,
\label{3.9-o}\\
& | \Lambda^\beta \pa_s p(s,\theta,\tau)| \le C\jb{s}^{-N},
\label{3.9}\\
& | \Lambda_*^\gamma \{ p(s,\theta,\tau)-{\mathcal F}_0[\vec{f}_+](s,\theta)
\}|\le C \tau ,
 \label{3.10-o}\\
& | \Lambda_*^\gamma \{ \pa_s p(s,\theta,\tau)-{\mathcal F}_+(s,\theta)
                     \}|
 \le C \tau\jb{s}^{-N}
 \label{3.10} 
\end{align}
for all $(s,\theta,\tau) \in {\R} \times S^2 \times [0,\tau_0]$.
\end{lm}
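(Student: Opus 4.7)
The plan is to work directly from the explicit formulas \eqref{Expression01} for $P$ and \eqref{Expression02} for $p$. Setting $D(s,\theta,\tau) = 1 + 2^{-1} G(\theta)\mathcal{F}_+(s,\theta)\tau$, the starting observation is that for the fixed $\tau_0 \in (0,\tau_*)$ the denominator satisfies $D \ge 1 - \tau_0/\tau_* > 0$ uniformly on $\R \times S^2 \times [0,\tau_0]$, so that $1/D$ and all its $(s,\theta,\tau)$-derivatives are bounded by constants depending only on $\tau_0$, $G$, and the bounds in \eqref{f}.

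For \eqref{3.9}, I would factor $P = \mathcal{F}_+ \cdot (1/D)$ and apply the Leibniz rule
\[
\Lambda^\beta P = \sum_{\beta_1 + \beta_2 = \beta} c_{\beta_1,\beta_2}\,(\Lambda^{\beta_1}\mathcal{F}_+)\,\Lambda^{\beta_2}(1/D),
\]
expanding $\Lambda^{\beta_2}(1/D)$ by Fa\`a di Bruno into a polynomial in $\Lambda^{\beta'}D$ and negative powers of $D$, all of which are bounded on the relevant domain. Since $\mathcal{F}_+$ does not depend on $\tau$, each factor $\Lambda^{\beta_1}\mathcal{F}_+$ is either zero (if $\beta_1$ contains $\Lambda_4$) or bounded by $C\langle s\rangle^{-N}$ thanks to \eqref{f}, so every term is bounded by $C\langle s\rangle^{-N}$, yielding \eqref{3.9}. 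Estimate \eqref{3.9-o} then splits into two cases: when $\beta_0 = 0$, the operators in $\Lambda^\beta$ act only in $\theta$ and $\tau$ and so commute with $\int_s^\infty \cdot \,ds'$, giving
\[
\Lambda^\beta p(s,\theta,\tau) = -\int_s^\infty \Lambda^\beta P(s',\theta,\tau)\,ds',
\]
which is bounded by integrating \eqref{3.9} with $N>1$; when $\beta_0 \ge 1$, commuting $\partial_s$ past the other factors (all of which commute with $\partial_s$) and using $\partial_s p = P$ reduces the estimate to \eqref{3.9}.

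For \eqref{3.10}, I would exploit the identity
\[
P - \mathcal{F}_+ = \mathcal{F}_+\left(\frac{1}{D} - 1\right) = -\frac{\tau}{2}\cdot\frac{G(\theta)\,\mathcal{F}_+(s,\theta)^2}{D(s,\theta,\tau)},
\]
which displays $\tau$ as an explicit prefactor. Since $\Lambda_*^\gamma$ involves only $\partial_s$ and the angular operators $o_{ij}$ (no $\partial_\tau$), this factor is preserved intact, and the remaining expression is controlled by the same Leibniz--Fa\`a di Bruno argument as above, now with the enhanced decay from $\mathcal{F}_+^2$. Finally, \eqref{3.10-o} follows by integration: from $\partial_s p = P$, $\partial_s \mathcal{F}_0[\vec{f}_+] = \mathcal{F}_+$ (by \eqref{f-00}), and the fact that $p(\cdot,\theta,\tau)$ and $\mathcal{F}_0[\vec{f}_+](\cdot,\theta)$ both tend to $0$ as $s\to\infty$ (by \eqref{f} with $N>1$ applied to the integrands in \eqref{Expression02} and the analogous integral representation of $\mathcal{F}_0[\vec{f}_+]$), one has
\[
p(s,\theta,\tau) - \mathcal{F}_0[\vec{f}_+](s,\theta) = -\int_s^\infty \bigl(P(s',\theta,\tau) - \mathcal{F}_+(s',\theta)\bigr)\,ds',
\]
and pulling $\Lambda_*^\gamma$ under the integral and applying \eqref{3.10} yields the bound $C\tau$.

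The only nontrivial point is the Leibniz--Fa\`a di Bruno bookkeeping used for \eqref{3.9}: one must verify that every term of the expansion really carries a factor that decays in $s$. This is transparent from the factorization $P = \mathcal{F}_+ \cdot (1/D)$, since the outer Leibniz step always produces one factor $\Lambda^{\beta_1}\mathcal{F}_+$ in every term, and the inner expansion of $\Lambda^{\beta_2}(1/D)$ uses only derivatives $\Lambda^{\beta'}D$ (each of which involves $G$, $\mathcal{F}_+$, or $\tau$ and their $s,\theta$-derivatives) and negative powers of $D$, all of which are uniformly bounded. Everything else is routine.
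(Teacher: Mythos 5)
Your proof is correct and follows essentially the same route as the paper's: bound the denominator $D$ away from zero uniformly on $\R\times S^2\times[0,\tau_0]$, use the decay \eqref{f} of $\mathcal{F}_+$ together with the explicit formulas \eqref{Expression01}--\eqref{Expression02} to obtain \eqref{3.9} and \eqref{3.10}, then integrate in $s$ to deduce \eqref{3.9-o} and \eqref{3.10-o}. The only (cosmetic) difference is that you obtain \eqref{3.9-o} by integrating over $(s,\infty)$ directly from the definition of $p$, whereas the paper invokes the support condition \eqref{SupportP} to integrate from $s$ to $R$; both are valid, and your version is marginally more self-contained.
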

\begin{proof}
Noting that $1+2^{-1}G(\theta) {\mathcal F}_+(s,\theta)\,\tau \ge 1-(\tau_0/\tau_*)$
for $s \in \R$, $\theta \in S^2$ and $0 \le \tau \le \tau_0$, we get (\ref{3.9})
from \eqref{f}, \eqref{Expression01}, and \eqref{DerivP}.
\eqref{3.9-o} follows immediately from \eqref{3.9} with $N>1$ because of
\eqref{SupportP}.

From \eqref{DerivP} and \eqref{Expression01} we get
\begin{align*}
\pa_s p(s,\theta,\tau)-{\mathcal F}_+(s,\theta)
=& -\frac{2^{-1}G(\theta){\mathcal F}_+^2(s,\theta)\tau}{1+2^{-1}G(\theta){\mathcal F}_+(s,\theta)\tau}.
\end{align*}
Recalling \eqref{f-00} and \eqref{SupportR}, and integrating the above identity, we obtain
\begin{align*}
p(s,\theta,\tau)-{\mathcal F}_0[\vec{f}_+](s,\theta)
=& \int_s^\infty \frac{2^{-1}G(\theta){\mathcal F}_+^2(s',\theta)\tau}{1+2^{-1}G(\theta){\mathcal F}_+(s',\theta)\tau}ds'.
\end{align*}
Hence (\ref{f}) yields \eqref{3.10-o} and \eqref{3.10}. 
This completes the proof.
\end{proof}

For a function $\varphi=\varphi(s,\theta, \tau)$, we define
$$
\widetilde{\varphi}(t,x):=\varphi(r-t, \omega, \ve \log t)
$$
with $r=|x|$ and $\omega=r^{-1}x$.
Then we have
\begin{align}\label{3.15+o}
& \partial_t \widetilde{\varphi}=-\widetilde{\pa_s \varphi}+\varepsilon t^{-1}
 \widetilde{\pa_\tau \varphi}, \quad
  O_{ij} \widetilde{\varphi}=\widetilde{o_{ij} \varphi} \quad (1\le i, j \le 3),
\\ \label{3.15++o}
& \nabla_{x} \widetilde{\varphi}= \omega \widetilde{\pa_s \varphi}-r^{-1} 
\omega \wedge \widetilde{o\varphi}
\quad\text{with}\ o\varphi=(o_{23}\varphi, o_{31}\varphi, o_{12}\varphi),
\end{align}
where we have used \eqref{rew1} to obtain \eqref{3.15++o}.

For $p(s,\theta, \tau)$ defined by \eqref{Expression02} we set 
\begin{align}\label{AppCone}
 w(t,x)=\ve |x|^{-1}\,\widetilde{p}(t,x) 
\end{align}
for $(t,x) \in \bigl[1,\exp(\tau_*/\ve)\bigr) \times (\R^3\setminus\{0\})$.
Then it will serve as an approximation of $u$ for large $t$,
and the estimates obtained in the above lemma are transfered as follows.

\begin{cor}\label{cor4.2}
We assume 
that \eqref{SupportCond} holds for some $R>1$.
Let $0<\tau_0<\tau_*$, and $0<\ve\le 1$. 
Then for 
any non-negative integer $k$, there
exists a positive constant $C=C(\tau_0, k)$ such that
\begin{align}
\label{3.17-o}
|w(t,x)|_k \le & C \ve \jb{t+|x|}^{-1}, \\
 \label{3.17}
|\pa w(t,x)|_k \le & C \ve \jb{t+|x|}^{-1}\jb{t-|x|}^{-1}
\end{align}
for $t/2 \le |x| \le t+R$ and $1 \le t \le \exp(\tau_0/\ve)$.
Moreover we have
\begin{align}
|w(t,x)-\ve u_0(t,x)|_k
 \le & C \ve \left(\log \frac{2}{\ve}\right) \jb{t+|x|}^{-2},
 \label{3.18-o}\\
|\pa_t \{w(t,x)-\ve u_0(t,x)\}|_k
 \le & C \ve \left(\log \frac{2}{\ve}\right) \jb{t+|x|}^{-2} \jb{t-|x|}^{-1},
 \label{3.18}
\end{align}
for $t/2 \le |x| \le t+R$ and $2 \le t \le 2/\ve$.
\end{cor}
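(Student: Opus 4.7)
My plan is to express every derivative of $w = \ve r^{-1}\widetilde{p}$ in terms of the quantities $\widetilde{\Lambda^\beta p}$ controlled by Lemma~\ref{Lemma3.2}, via the chain rule \eqref{3.15+o}--\eqref{3.15++o}. Two preliminary observations simplify matters: in both regions $t/2\le r\le t+R$ implies $r^{-1}\le C\jb{t+r}^{-1}$, and $\ve\log t\in[0,\tau_0]$ so that Lemma~\ref{Lemma3.2} is applicable; for the second pair of estimates the additional constraint $t\le 2/\ve$ provides the further scalar inequalities $\ve r\le C$ and $\ve t\le C\log(2/\ve)$.

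By the chain rule, each $Z$ applied to $\widetilde{p}$ produces $\widetilde{\Lambda^\beta p}$ with prefactors from $\{1,\ve t^{-1},r^{-1}\}$, while each $Z$ applied to $r^{-1}$ yields multiples of $r^{-1}$. Iterating, $Z^\alpha w$ is a finite sum of terms $\ve\, r^{-1-a}\,(\ve t^{-1})^{b}\,\widetilde{\Lambda^\beta p}$, and likewise for $Z^\alpha\pa_a w$. For \eqref{3.17-o}, \eqref{3.9-o} gives $|\widetilde{\Lambda^\beta p}|\le C$; combined with $\ve t^{-1}\le 1$ and $r^{-1}\le C\jb{t+r}^{-1}$, this yields the claim. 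For \eqref{3.17}, the terms in which $\Lambda^\beta$ contains at least one $\pa_s$ inherit the decay $\jb{t-r}^{-1}$ from \eqref{3.9} with $N=1$; the remaining terms come with an extra $\ve t^{-1}$ or $r^{-1}$ factor, and the required $\jb{t-r}^{-1}$ weight follows from the elementary bound $\ve\jb{t-r}\le\jb{t+r}$.

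For \eqref{3.18-o} and \eqref{3.18}, I would split
\begin{align*}
w-\ve u_0=\ve\,r^{-1}\bigl(\widetilde{p}-\mathcal{F}_0[\vec{f}_+](r-t,\omega)\bigr)+\ve\bigl(r^{-1}\mathcal{F}_0[\vec{f}_+](r-t,\omega)-u_0\bigr).
\end{align*}
The second summand is controlled directly by \eqref{1.0}, yielding $O\bigl(\ve\jb{t+r}^{-2}\jb{t-r}^{-\nu}\bigr)$. For the first summand, when $Z^\alpha$ acts on the parenthesis the chain rule produces either $\Lambda_*^\gamma(p-\mathcal{F}_0[\vec{f}_+])$, controlled by \eqref{3.10-o} with the small factor $\tau=\ve\log t$, or terms containing at least one $\pa_\tau$; since $\pa_\tau$ annihilates $\mathcal{F}_0[\vec{f}_+]$, these reduce to $(\ve t^{-1})^a\widetilde{\Lambda^\beta p}$ with $a\ge 1$, bounded by $C\ve$. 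In the regime $2\le t\le 2/\ve$ both $\ve\log t$ and $\ve$ are $\le C\ve\log(2/\ve)$; combined with $\ve r^{-1}\le C\jb{t+r}^{-2}$ (from $\ve r\le C$), this gives \eqref{3.18-o}. The proof of \eqref{3.18} is parallel, using \eqref{3.10} in place of \eqref{3.10-o} to supply the extra $\jb{t-r}^{-1}$ decay, and invoking the second half of \eqref{1.0} together with \eqref{f-00} to write $\pa_t u_0=-r^{-1}\mathcal{F}_+(r-t,\omega)+O\bigl(\jb{t+r}^{-2}\jb{t-r}^{-\nu}\bigr)$; the stray $\ve t^{-1}$ from a $\pa_t$ acting on $\widetilde{p}$ is absorbed using $\ve t\le C\log(2/\ve)$.

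The main obstacle is bookkeeping: iterating the chain rule generates many terms whose prefactors are products of $\ve t^{-1}$ and $r^{-1}$, and one has to verify that each such term can be absorbed into precisely the weights $\jb{t+r}^{-1}$, $\jb{t+r}^{-2}$, and $\jb{t-r}^{-1}$ appearing on the right-hand sides, using the region-specific elementary inequalities $\ve\jb{t-r}\le\jb{t+r}$, $\ve r\le C$, and $\ve t\le C\log(2/\ve)$.
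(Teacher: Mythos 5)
Your proof is correct and follows essentially the same route as the paper: rewrite $w$ and $w-\ve u_0$ via the chain-rule identities \eqref{3.15+o}--\eqref{3.15++o}, feed in the bounds from Lemma~\ref{Lemma3.2} (using \eqref{3.9-o}, \eqref{3.9} for the first pair of estimates and \eqref{3.10-o}, \eqref{3.10} with the radiation-field comparison \eqref{1.0} for the second pair), and close with the region-specific scalar inequalities $r^{-1}\lesssim\jb{t+r}^{-1}$, $\ve\lesssim\jb{t+r}^{-1}$ (for $t\le 2/\ve$), and $\ve\log t\le\ve\log(2/\ve)$. The only cosmetic difference is that you decompose $w-\ve u_0$ around $\mathcal{F}_0[\vec{f}_+]$ once and treat \eqref{3.18} by differentiating, whereas the paper writes the analogous identity directly for $\pa_t\widetilde{p}+\mathcal{F}_+$ and says \eqref{3.18-o} follows similarly; the estimates invoked are the same.
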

\begin{proof}
We write $x=r\omega$ with $r=|x|$ and $\omega\in S^2$.
We suppose that $t/2\le r \le t+R$ and $1\le t\le \exp(\tau_0/\ve)$ in what follows.
Then we have 
\begin{equation}
\label{Equi00}
|t^{-1}|_k+|r^{-1}|_k+|\jb{t+r}^{-1}|_k\le C\jb{t+r}^{-1}.
\end{equation}
From \eqref{3.9-o}, \eqref{3.9} (with $N=1$), \eqref{3.15+o}, and \eqref{3.15++o},
we get
\begin{align}
 |\widetilde{p}(t,x)|_k \le & C \sum_{|\beta|\le k} \bigl|\widetilde{\Lambda^\beta p}(t,x)\bigr| 
\le C \label{3.15},\\
 |\pa \widetilde{p}(t,x)|_k \le & C \sum_{|\beta|\le k} 
    \bigl|\widetilde{\Lambda^\beta \pa_s p}(t,x)\bigr| 
 {}+C\jb{t+r}^{-1}\sum_{|\beta|\le k+1} \bigl|\widetilde{\Lambda^\beta p}(t,x)\bigr|
 \label{3.15-o}\\
 \le & C\jb{t-r}^{-1}.
 \nonumber
\end{align}
From \eqref{3.15} and \eqref{3.15-o},
we obtain \eqref{3.17-o} and \eqref{3.17}, because of \eqref{Equi00}.

Next we prove (\ref{3.18}). 
Suppose that we also have $2\le t\le 2\ve^{-1}$ from now on. 
Then we have
$\ve\le C\jb{t+r}^{-1}$.
By \eqref{3.15+o} we get
\begin{align}
\pa_t \widetilde{p}(t,x)+{\mathcal F}_+(r-t,\omega)
=&-\bigl\{\widetilde{\pa_s p}(t,x)-{\mathcal F}_+(r-t,\omega)\bigr\}\\
&{}+\ve t^{-1}\widetilde{\pa_\tau p}(t,x).
\nonumber
\end{align}
We can inductively obtain the explicit formula for $\pa_t^m\bigl(\pa_t\widetilde{p}+{\mathcal F}_+\bigr)$ for $m\ge 1$.
Therefore, it follows from \eqref{3.9-o}, 
\eqref{3.10} with $N=1$, \eqref{3.15+o}, and \eqref{3.15++o} that 
\begin{align}
\label{kk01}
& |\pa_t \widetilde{p}(t,x)+{{\mathcal F}_+}(r-t,\omega)|_k  \\
& \quad
  \le C\sum_{|\gamma|\le k}
  |\widetilde{\Lambda_*^\gamma \pa_s p}(t,x)-(\Lambda_*^\gamma{\mathcal F}_+)(r-t,\omega)|
  \nonumber\\
  & \qquad
  {}+C\ve\jb{t+r}^{-1}\sum_{|\beta|\le k}|\widetilde{\Lambda^\beta\pa_\tau p}(t,x)|
\nonumber\\
\nonumber
& \quad \le C \ve \left((\log t)\jb{t-r}^{-1}+\jb{t+r}^{-1}\right)
\\ \nonumber
& \quad \le C \left(\log \frac2{\ve}\right) \jb{t+r}^{-1} \jb{t-r}^{-1}.
\end{align}
Besides, (\ref{1.0}) yields 
\begin{align}\label{3.20+}
|r^{-1} {\mathcal F}_+(r-t,\omega)+\pa_t u_0(t,x)|_k 
   \le C \jb{t+r}^{-2} \jb{t-r}^{-1}.
\end{align}
From \eqref{kk01} and \eqref{3.20+} we obtain \eqref{3.18}.

Similarly  \eqref{3.18-o} follows from \eqref{3.9-o}, \eqref{3.10-o}, \eqref{3.15+o}, and \eqref{3.15++o}
with the help of \eqref{1.0}. 
This completes the proof.
\end{proof}

Now we are in a position to construct an approximate solution $u_1$:
Let ${\chi}$ and ${\xi}$ be smooth and non-negative functions on $[0,\infty)$
such that
\begin{align*}
{\chi}(s)=\begin{cases} 1, & s\le 1,\\
                        0, & s\ge 2,\\
          \end{cases}
\quad \
{\xi}(s)=\begin{cases} 0, & s\le 1/2,\\
                               1, & s\ge 3/4.
               \end{cases}
\end{align*}
We put $\chi_{\ve}(t)=\chi(\varepsilon t)$ for $\ve>0$,
and  $\eta(t,x)=\xi(|x|/t)$.
Let $u_0$ be the solution of the mixed problem (\ref{hw})--(\ref{idh}),
and let $w$ be given by \eqref{AppCone}. 
We define the approximate solution $u_1$ by
\begin{align}\label{3.4}
 u_1(t,x)=\chi_\ve(t)\ve u_0(t,x)+\bigl(1-\chi_{\ve}(t)\bigr) 
 \eta(t,x) w(t,x)
\end{align}
for $(t,x) \in \bigl[0,\exp(\tau_*/\ve)\bigr) \times \overline{\Omega}$.
By \eqref{SupportCond} and the property of finite propagation, we have
$|x|\le t+R$ in $\supp u_0$. Hence, recalling \eqref{SupportP}, we find that
\begin{equation}
\label{SupportA}
u_0(t,x)=w(t,x)=u_1(t,x)=0\quad \text{ for $|x|\ge t+R$}.
\end{equation}
We are going to evaluate $u_1$ and the error term
\begin{equation}\label{3.3}
E(u_1)(t,x)=\dal u_1(t,x)-F\bigl(\partial u_1(t,x)\bigr)
\end{equation}
that is expected to be small if $u_1$ is a good approximate solution to
\eqref{eq}.
\begin{prop}
\label{Lemma3.3}
We assume that \eqref{SupportCond} holds for some $R>1$.
Let $0<\tau_0<\tau_*$, $k$ be a non-negative integer, 
$0\le\lambda\le 1/2$, $0<\mu\le 1/4$, and $0<\ve \le 1/2$.
Then there exists a positive constant $C=C(\tau_0, k, \lambda, \mu)$ such that 
\begin{align}
& |u_1(t,x)|_k \le C \ve \jb{t+|x|}^{-1},
  \label{3.11}\\
& |\pa u_1(t,x)|_k \le C \ve \jb{t+|x|}^{-1} \jb{t-|x|}^{-1},
  \label{3.11-o}\\
& |E(u_1)(t,x)|_k
 \le C \ve^{1+\lambda} \jb{t+|x|}^{-2+\lambda-\mu} \jb{t-|x|}^{-1+\mu}
 \label{3.11+}
\end{align}
for $(t,x) \in [0, \exp(\tau_0/\ve)] \times \overline{\Omega}$,
and 
\begin{equation}
\bigl\| |E(u_1)(t,\cdot)|_k \!:\! L^2(\Omega)\bigr\|
\le C\ve^{1+\lambda} (1+t)^{-(3/2)+\lambda}
  \label{3.11++}
\end{equation}
for $t\in [0, \exp(\tau_0/\ve)]$.
\end{prop}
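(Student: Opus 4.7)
The plan is to partition $[0,\exp(\tau_0/\ve)]\times\overline{\Omega}$ according to the cutoffs $\chi_\ve(t)$ and $\eta(t,x)$ appearing in \eqref{3.4} and analyze each piece separately. The pointwise bounds on $u_1$ itself are routine; the error estimate \eqref{3.11+} is where the ODE construction of $p$ is essential.

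For \eqref{3.11} and \eqref{3.11-o}, on $\supp\chi_\ve$ I would apply Proposition~\ref{main}(1) with $\rho=1$ to get $|\ve u_0|_k\le C\ve\jb{t+|x|}^{-1}$, and Proposition~\ref{main}(2) (with $h\equiv 0$ and $\rho\ge 2$) to get $|\ve\pa u_0|_k\le C\ve\jb{t+|x|}^{-1}\jb{t-|x|}^{-1}$, the latter via the elementary inequality $\jbx\jb{t-|x|}^{\rho-1}\ge c\jb{t+|x|}$ valid for $\rho\ge 2$. On $\supp(1-\chi_\ve)\eta$, Corollary~\ref{cor4.2} supplies the analogous bounds for $w$, and commutators of $|\cdot|_k$ with the bounded cutoffs contribute only harmless constants.

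The core of the proof is \eqref{3.11+}. In the region $t\le 1/\ve$ (where $\chi_\ve\equiv 1$ and $u_1=\ve u_0$) one has $\dal u_1=0$, so $E(u_1)=-F(\ve\pa u_0)$ is bounded by $C\ve^2\jb{t+|x|}^{-2}\jb{t-|x|}^{-2}$; the extra weight $\ve^{1-\lambda}\jb{t+|x|}^{-\lambda+\mu}\jb{t-|x|}^{-1-\mu}\le 1$ absorbs this into the target. In the region $t\ge 2/\ve$ (where $\chi_\ve\equiv 0$ and $u_1=\eta w$), the subregion $\eta\equiv 0$ is trivial; in the $\eta$-transition strip $t/2\le |x|\le 3t/4$ we have $\jb{t-|x|}\gtrsim t\ge 1/\ve$, so the cutoff-derivative terms in $\dal(\eta w)$ are controlled via Corollary~\ref{cor4.2}; and the subregion $\eta\equiv 1$ reduces to the ODE computation below. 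The $\chi_\ve$-transition $1/\ve\le t\le 2/\ve$ is handled similarly, inserting the matching estimates \eqref{3.18-o}, \eqref{3.18} (which supply smallness of $\ve u_0-w$ where $\eta\equiv 1$) and direct decay elsewhere, together with the small factors $\chi_\ve'=O(\ve)$, $\chi_\ve''=O(\ve^2)$.

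In the key subregion $\eta\equiv 1$, $u_1=w$, writing $rw=\ve\tilde p$ and combining $\dal=r^{-1}(\pa_t^2-\pa_r^2)r-r^{-2}\Delta_{S^2}$ with the identities \eqref{3.15+o}--\eqref{3.15++o} and the ODE $2\pa_\tau P=-G(\theta)P^2$ (with $P=\pa_s p$) yields
$$\dal w=\ve^2 r^{-1}t^{-1}G(\omega)\widetilde{(\pa_s p)^2}+R_1,\qquad F(\pa w)=\ve^2 r^{-2}G(\omega)\widetilde{(\pa_s p)^2}+R_2,$$
where $R_1$ collects $-\ve r^{-3}\widetilde{\Delta_{S^2}p}$ and the $O(\ve^2 r^{-1}t^{-2})$ contributions from $\pa_\tau,\pa_\tau^2$, and $R_2$ originates from $\pa_a w=\omega_a\ve r^{-1}\widetilde{\pa_s p}+O(\ve r^{-2})$ (with $\omega_0=-1$). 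Here $r\sim t+|x|$, so \eqref{3.9-o} and \eqref{3.9} give $|R_1|_k+|R_2|_k\le C\ve\jb{t+|x|}^{-3}$, while the difference of the main terms equals $\ve^2 G(\omega)(r-t)r^{-2}t^{-1}\widetilde{(\pa_s p)^2}$, which decays rapidly in $\jb{t-|x|}$ thanks to $|(r-t)(\pa_s p)^2|\le C\jb{r-t}^{1-2N}$ from \eqref{3.9}. Using $\jb{t+|x|}^{-1}\le C\ve$ (valid since $t\ge 1/\ve$ here) then gives the desired weight. Finally, \eqref{3.11++} is obtained by integrating \eqref{3.11+} over $\supp u_1\subset\{|x|\le t+R\}$: for $\mu\le 1/4$,
$$\bigl\||E(u_1)(t,\cdot)|_k\bigr\|_{L^2(\Omega)}^2\le C\ve^{2+2\lambda}\jb{t}^{-4+2\lambda-2\mu}\int_0^{t+R}r^2\jb{t-r}^{-2+2\mu}\,dr\le C\ve^{2+2\lambda}\jb{t}^{-2+2\lambda}.$$
The main obstacle I anticipate is the bookkeeping in the ODE cancellation: verifying that each correction produced by the $\ve t^{-1}\pa_\tau$ term in \eqref{3.15+o}, by the non-radial pieces of $\pa_j$ in \eqref{3.15++o}, and by derivatives of $\eta$ and $\chi_\ve$, is individually majorized by the single weight in \eqref{3.11+} uniformly in $(\lambda,\mu)$.
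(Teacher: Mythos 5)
Your treatment of \eqref{3.11}, \eqref{3.11-o}, and \eqref{3.11+} is essentially the paper's argument (region-by-region via the cutoffs $\chi_\ve,\eta$, the matching estimates \eqref{3.18-o}--\eqref{3.18} in the transition strip, and the ODE cancellation $\pa_\tau\pa_s p=-\tfrac12 G(\omega)(\pa_s p)^2$ in the bulk), modulo minor choices in which a priori estimate to invoke for $u_0$. The identified main terms and remainders in the $\eta\equiv 1$ region match the paper's $I_{34}$ analysis.

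There is, however, a genuine gap in your proof of the $L^2$ estimate \eqref{3.11++}: you cannot obtain it by integrating the already-weakened pointwise bound \eqref{3.11+}. Your own computation gives
$\bigl\||E(u_1)(t)|_k\bigr\|_{L^2}^2 \le C\ve^{2+2\lambda}\jb{t}^{-4+2\lambda-2\mu}\int_0^{t+R} r^2\jb{t-r}^{-2+2\mu}\,dr \le C\ve^{2+2\lambda}\jb{t}^{-2+2\lambda-2\mu}$,
so after taking a square root you only reach $\ve^{1+\lambda}(1+t)^{-1+\lambda-\mu}$. Since $\mu\le 1/4$, the exponent $-1-\mu\ge -5/4 > -3/2$, so this is strictly \emph{weaker} than the required $\ve^{1+\lambda}(1+t)^{-3/2+\lambda}$. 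In fact, your displayed endpoint $\jb{t}^{-2+2\lambda}$ is already insufficient: \eqref{3.11++} needs $\jb{t}^{-3+2\lambda}$ after squaring. The trouble is that \eqref{3.11+} trades a power of $\jb{t+|x|}^{-1}$ for $\jb{t-|x|}^{-1+\mu}$, and that trade is lossy under $L^2(r^2\,dr)$ integration because the $r^2$ weight cancels two powers of $\jb{t+r}^{-1}$ near $r\sim t$ while the $\jb{t-r}^{-1+\mu}$ factor is not integrable enough to compensate. The paper avoids this by deriving \eqref{3.11++} from the \emph{intermediate} pointwise bounds that retain the extra power of $\jb{t+|x|}^{-1}$: namely $|I_0|_k\le C\ve^{1+\lambda}\jb{t+r}^{-3+\lambda}\jb{t-r}^{-2}$, $|I_1+I_2|_k\le C\ve^{1+\lambda}\jb{t+r}^{-3+\lambda+\delta}\jb{t-r}^{-1}$, and $|I_3|_k\le C\ve^{1+\lambda}\jb{t+r}^{-3+\lambda}$, each carrying $\jb{t+r}^{-3+\lambda}$ rather than $\jb{t+r}^{-2+\lambda-\mu}$. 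Only \emph{after} taking the $L^2$ norm does one convert to the uniform form \eqref{3.11+}. Your argument should therefore record those stronger unweakened bounds separately and integrate them directly, rather than routing through \eqref{3.11+}.
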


\begin{proof}
We write $x=r\omega$ with $r=|x|$ and $\omega\in S^2$.
First of all, we derive the estimates for the cut-off functions.
Since we have $\ve t\le 2$ for $t\in \supp \chi_\ve$ and we have assumed
$0<\ve\le 1/2$, we get
\begin{equation}
\label{CutOffEst01}
\ve\le C(1+t)^{-1} \quad \text{for $t\in \supp \chi_\ve$}.
\end{equation}
Let $m$ be a non-negative integer.
Since $\chi_\ve(t)=\chi(\ve t)$, \eqref{CutOffEst01} leads to
\begin{equation}
\label{CutOffEst12}
\left|\frac{d^m\chi_\ve}{dt^m}(t)\right| =
\ve^m \left|\frac{d^m\chi}{dt^m}(\ve t)\right|\le C_m (1+t)^{-m}
\quad \text{for $t\ge 0$},
\end{equation}
where $C_m$ is a positive constant.
We turn our attention to $\eta$; observing that $t\ge \ve^{-1}\ge 2$ for $t\in \supp(1-\chi_\ve)$,
we only need the estimate of $\eta$ for $t\ge 2$.
Since $\eta$ is depending only on $r/t$
and we have $t/2\le r \le 3t/4$ for $(t,x)\in \supp \pa \eta$,
it is easy to see that
$O_{jk}\eta(t,x)=0$ for $1\le j, k\le 3$, and that
\begin{equation}
\label{CutOffEst13}
\sum_{|\alpha|=m} |\pa^\alpha \eta(t,x)| \le C_m \jb{t+r}^{-m} \quad \text{for $(t,x)\in [2,\infty)\times \R^3$},
\end{equation}
where $m$ is a non-negative integer, $\pa=(\pa_t,\nabla_x)$, and $\alpha$ is a multi-index.
Besides, since $r\le 3t/4$ in $\supp(1-\eta)\cup\supp \pa\eta$, we get
\begin{equation}
\label{CutOffEst14}
\jb{t-r}^{-1}\le C\jb{t+r}^{-1}\quad\text{for $(t,x)\in \supp (1-\eta) \cup \supp \pa \eta$}.
\end{equation}

Now we are going to prove \eqref{3.11} and \eqref{3.11-o}.
It follows from \eqref{ba3} with $\rho=2$ that
\begin{align}
 |u_0(t,x)|_{k+1} \le C {\mathcal A}_{4, k+4}[\vec{f}\,]\,
 \jb{t+|x|}^{-1} \jb{t-|x|}^{-2}
 \label{3.14}
\end{align}
for $(t,x) \in [0,\infty) \times \overline{\Omega}$.
Recalling \eqref{SupportA}, we have $(1+t)^{-1}\le C \jb{t+r}^{-1}$ for $(t,x)\in \supp w$.  
Therefore, we get \eqref{3.11} and \eqref{3.11-o} from \eqref{3.17-o}, \eqref{3.17},
\eqref{CutOffEst12}, \eqref{CutOffEst13}, and \eqref{3.14}.

Next we consider \eqref{3.11+} and \eqref{3.11++}.
If we set
\begin{align}\label{DefV}
v(t,x)=\eta(t,x) w(t,x)-\ve u_0(t,x),
\end{align}
then we have $u_1=\ve u_0+(1-\chi_\ve) v$ by \eqref{3.4}.
It follows that
\begin{equation}
E(u_1)=\dal u_1-F(\pa u_1)=I_0+I_1+I_2+I_3,
\end{equation}
where 
$$
\begin{array}{ll}
I_0 = -\chi_\ve(t)F(\pa u_1), 
& I_1 =-\chi_\ve''(t) v(t,x),\\
I_2 = -2\chi_\ve'(t) \pa_t v(t,x), &
I_3 = \bigl(1-\chi_\ve(t)\bigr) \left\{\dal \bigl( \eta(t,x) w(t,x)\bigr)-F(\pa u_1)\right\}.
\end{array}
$$
We will estimate $I_j$ for $0\le j\le 3$.

By \eqref{CutOffEst01} and \eqref{SupportA}, we have
\begin{align}
\label{basic00}
\ve \le C\jb{t+r}^{-1}\quad \text{for $(t,x)\in 
\supp I_0\cup \supp I_1\cup \supp I_2$}.
\end{align}
Let 
$0\le \lambda \le 1/2$ and $0<\mu \le 1/4$.
From \eqref{3.11-o} we get
\begin{align}
\label{basic02}
|I_0|_k\le & C|F(\pa u_1)|_k\le C\ve^2 \jb{t+r}^{-2}\jb{t-r}^{-2}\\
       \le & C\ve^{1+\lambda}\jb{t+r}^{-3+\lambda}\jb{t-r}^{-2} 
\nonumber \\
       \le & C \ve^{1+\lambda}\jb{t+r}^{-2+\lambda-\mu}\jb{t-r}^{-1+\mu},
\nonumber
\end{align}
where we have used \eqref{basic00} to obtain the second line.
From the second line of \eqref{basic02}, we also get
\begin{equation}
\label{basic021}
\bigl\||I_0|_k\bigr\|_{L^2}\le C\ve^{1+\lambda}\jb{t}^{-2+\lambda}
\le C\ve^{1+\lambda}\jb{t}^{-(3/2)+\lambda}.
\end{equation}

Fix $\delta\in (0,1/4]$.
Then we get
\begin{equation}
\label{LogBasic}
\log(2/\ve)\le C_\delta \ve^{-\delta}\text{ for $0<\ve\le 1/2$,}
\end{equation}
where $C_\delta$ is a constant depending only on $\delta$.
 Observe that \eqref{CutOffEst13}, \eqref{CutOffEst14} and \eqref{3.14} yield
\begin{equation}
\left|\bigl(1-\eta(t,x)\bigr)u_0(t,x)\right|_k+\left|\bigl(1-\eta(t,x)\bigr)\pa_t 
u_0(t,x)\right|_k
\le C\jb{t+r}^{-3}.
\label{CutOffEst41}
\end{equation}
Observe also that we have $2\le t\le 2/\ve$ and $t/2\le r\le t+R$ in 
$\supp \bigl(\chi_\ve'' \eta(w-\ve u_0)\bigr)$.   
Therefore, writing $$
I_1=-\ve^2 \chi''(\ve t)\bigl(\eta(w-\ve u_0)-\ve(1-\eta)u_0\bigr),
$$
by \eqref{3.18-o}, \eqref{CutOffEst41}, \eqref{basic00} and \eqref{LogBasic},
we get
\begin{align}
\label{basic03}
|I_1|_k\le & C\ve^3 \left(\log\frac{2}{\ve}\right)\jb{t+r}^{-2}
\le C\ve^{2-\delta} \jb{t+r}^{-2}\jb{t-r}^{-1}.
\end{align}
Similarly, if we write 
\begin{align*}
I_2=& -2\ve\chi'(\ve t)\bigl(
(\pa_t\eta)(w-\ve u_0)+(\pa_t \eta)\ve u_0\\
& \qquad\qquad\qquad {}+\eta(\pa_t w-\ve \pa_t u_0)-(1-\eta)\ve \pa_t u_0
\bigr),
\end{align*} 
then it follows from 
\eqref{3.18-o}, \eqref{3.18}, 
\eqref{CutOffEst13}, 
\eqref{3.14},
\eqref{LogBasic}, and \eqref{CutOffEst41}
that
\begin{align}
\label{basic04}
|I_2|_k \le & C\ve^2 \left(\log\frac{2}{\ve}\right)\jb{t+r}^{-2}\jb{t-r}^{-1}
\\
\le & 
C\ve^{2-\delta}\jb{t+r}^{-2}\jb{t-r}^{-1}.
\nonumber
\end{align}
By \eqref{basic03}, \eqref{basic04}, and \eqref{basic00} we get
\begin{align}
\label{Con01}
|I_1+I_2|_k\le & C\ve^{1+\lambda}\jb{t+r}^{-3+\lambda+\delta}\jb{t-r}^{-1}\\
\le & C\ve^{1+\lambda}\jb{t+r}^{-2+\lambda-\mu}
\jb{t-r}^{-1+\mu},
\nonumber
\end{align}
since $\mu+\delta<1$.
Moreover, integrating  
the first line of \eqref{Con01},
we get
\begin{align}
\label{Con11}
\bigl\||I_1+I_2|_k\bigr\|_{L^2} \le & 
C\ve^{1+\lambda}\jb{t}^{-2+\lambda+\delta}
\le C\ve^{1+\lambda}\jb{t}^{-(3/2)+\lambda}.
\end{align}

We further decompose $I_3$ as
\begin{equation}
I_3=\bigl(1-\chi_\ve(t)\bigr)(I_{31}+I_{32}+I_{33}+I_{34}),
\end{equation}
where 
\begin{align*}
I_{31}=& -F(\pa u_1)+\eta F(\pa w),\quad  I_{32}=(\dal \eta)w,\\
I_{33}=& 2\bigl\{
(\pa_t \eta)(\pa_t w)-(\nabla_x \eta)\cdot(\nabla_x w) \bigr\}, \quad I_{34}=\eta \bigl(\dal 
 w-F(\pa w)\bigr).
\end{align*}
We start with the estimate for $I_{31}$: If $0\le \ve t\le 2$, then similarly to \eqref{basic02} we get
$$
|(1-\chi_\ve)I_{31}|_k\le C\ve^{1+\lambda}\jb{t+r}^{-3+\lambda}\jb{t-r}^{-2},
$$
by \eqref{3.17} and \eqref{3.11-o}.
If $r\le 3t/4$, then we have $\jb{t-r}^{-1}\le C\jb{t+r}^{-1}$, and we get 
$$
|(1-\chi_\ve)I_{31}|_k\le C\ve^2 \jb{t+r}^{-4}.
$$
Since $\eta=1$ and 
$u_1=w$ when $\ve t\ge 2$ and $r\ge 3t/4$, we get $I_{31}=0$.
Summing up, we have proved
\begin{equation}
\label{basic07}
|(1-\chi_\ve)I_{31}|_k\le 
C\ve^{1+\lambda}\jb{t+r}^{-3+\lambda}\jb{t-r}^{-2}
{}+C\ve^{2}\jb{t+r}^{-4}.
\end{equation}

Before we proceed further, we note that 
\begin{equation}
\label{basicSupp}
1\le \ve^{-1}/2\le t/2\le r\le t+R
\text{ in $\supp \bigl(I_3-(1-\chi_\ve) I_{31}\bigr)$}.
\end{equation}
From \eqref{3.17-o} and \eqref{CutOffEst13} (with $m=2$) we get
\begin{equation}
\label{basic05}
|(1-\chi_\ve)I_{32}|_k\le C\ve \jb{t+r}^{-3}. 
\end{equation}
Using \eqref{CutOffEst13} (with $m=1$) and \eqref{CutOffEst14},
from \eqref{3.17} we get
\begin{equation}
\label{basic06}
|(1-\chi_\ve)I_{33}|_k\le C\ve \jb{t+r}^{-3}.
\end{equation}

We are going to estimate $I_{34}$ by writing
\begin{align*}
\dal w-F(\pa w)= & \left(\dal w+\frac{2\ve^2}{tr}\widetilde{\pa_\tau\pa_sp}\right)
{}-\left(\frac{2\ve^2}{tr}\widetilde{\pa_\tau\pa_sp}+\frac{\ve^2}{r^2}G(\omega)
\bigl(\widetilde{\pa_s p}\bigr)^2\right)\\
&+\left(
\frac{\ve^2}{r^2}G(\omega)
\bigl(\widetilde{\pa_s p}\bigr)^2-F(\pa w)\right).
\end{align*}
We assume that $(t,x)\in \supp (1-\chi_\ve) I_{34}$ in the following estimates
\eqref{basic081stat}--\eqref{basic082},
so that $t$ and $r$ are equivalent to $\jb{t+r}$ (see \eqref{basicSupp}).
Recalling \eqref{AppCone},  \eqref{3.15+o}, and \eqref{3.15++o}, we get
\begin{align}
\label{basic081stat}
\dal w=&
\ve r^{-1} \bigl((\pa_t^2-\pa_r^2)\widetilde{p}-r^{-2}\Delta_\omega\widetilde{p}\bigr)\\
=& -\frac{2\ve^2}{tr}\widetilde{\pa_\tau\pa_s p}+\frac{\ve^2}{t^2r}(\ve\widetilde{\pa_\tau^2 p}-\widetilde{\pa_\tau p})-\frac{\ve}{r^3}\widetilde{\Delta_\theta p},
\nonumber
\end{align}
where $\Delta_\omega=\sum_{1\le j<k\le 3} O_{jk}^2$, and $\Delta_\theta=\sum_{1\le j<k\le 3}
o_{jk}^2$.
Hence \eqref{3.9-o} yields
\begin{equation}
\label{basic081}
\left|\dal w+\frac{2\ve^2}{tr}\widetilde{\pa_\tau\pa_s p}\right|_k\le C\ve \jb{t+r}^{-3}.
\end{equation}
Using \eqref{3.15+o} and \eqref{3.15++o} for $\varphi=\pa_s p$,
from \eqref{3.9-o} and \eqref{3.9} (with $N=1$) we get
\begin{equation}
\label{4.9'}
|\widetilde{\pa_s p}(t,x)|_k\le C \jb{t-r}^{-1}.
\end{equation}
By \eqref{3.5}, \eqref{DerivP}, and \eqref{4.9'}, we obtain
\begin{align}
\label{basic083}
\left|\frac{2\ve^2}{tr}\widetilde{\pa_\tau\pa_s p}+\frac{\ve^2}{r^{2}} G(\omega)\bigl(\widetilde{\pa_s p}\bigr)^2\right|_k=&
\left|\frac{(t-r)\ve^2}{tr^{2}} G(\omega)\bigl(\widetilde{\pa_s p}\bigr)^2\right|_k\\
 \le & C\ve^2 \jb{t+r}^{-3}\jb{t-r}^{-1}.   \nonumber
\end{align}
It follows from \eqref{3.9-o}, \eqref{3.15+o}, and \eqref{3.15++o} 
that
\begin{equation}
\label{ApproxF}
\sum_{a=0}^3 |\ve r^{-1}\omega_a\widetilde{\pa_s p}-\pa_a w|_k\le C\ve \jb{t+r}^{-2},
\end{equation}
where $\omega_0=-1$.
If we put 
$$
Q_{a,b}=(\ve r^{-1}\omega_a \widetilde{\pa_s p})(\ve r^{-1}\omega_b \widetilde{\pa_s p})-(\pa_a w)(\pa_b w),
$$
then, by \eqref{4.9'} and \eqref{ApproxF}, we obtain
\begin{equation}
|Q_{a,b}|_k\le C\ve^2 \jb{t+r}^{-3}\jb{t-r}^{-1},
\end{equation}
which immediately leads to
\begin{align}
\label{basic082}
\left|\frac{\ve^2}{r^{2}} G(\omega)\bigl(\widetilde{\pa_s p}\bigr)^2
{}-F(\pa w)\right|_k=&
\left|\sum_{a,b}g^{a,b}Q_{a,b}\right|_k\\
\le & C\ve^2 \jb{t+r}^{-3}\jb{t-r}^{-1}. 
\nonumber
\end{align}
Now \eqref{basic081}, \eqref{basic083}, and \eqref{basic082} lead to
\begin{equation}
\label{basic08}
|(1-\chi_\ve)I_{34}|_k\le C\ve \jb{t+r}^{-3}.
\end{equation}

From \eqref{basic07}, \eqref{basic05}, \eqref{basic06}, and \eqref{basic08},
we obtain
\begin{align}
\label{Con02}
|I_3|_k\le C\ve \jb{t+r}^{-3}+C\ve^{1+\lambda}\jb{t+r}^{-3+\lambda}\jb{t-r}^{-2}.
\end{align}
Since $\ve t\ge 1$ in $\supp I_3$, we have
\begin{equation}
\label{basic01}
\ve \ge t^{-1}\ge \jb{t+r}^{-1}\quad \text{for $(t,x)\in \supp I_3$}.
\end{equation}
If we use \eqref{basic01}
to rewrite the first term on the right-hand side of \eqref{Con02}, 
then we get
\begin{align}
\label{Con03a}
|I_3|_k\le C\ve^{1+\lambda} \jb{t+r}^{-3+\lambda},
\end{align}
which yields
\begin{align}
\label{Con03}
|I_3|_k\le C\ve^{1+\lambda} \jb{t+r}^{-2+\lambda-\mu}\jb{t-r}^{-1+\mu}
\end{align}
and
\begin{align}
\label{Con12}
\bigl\||I_{3}|_k\bigr\|_{L^2} \le & C\ve^{1+\lambda}
\left(\int_0^\infty\jb{t+r}^{-6+2\lambda}r^2dr\right)^{1/2}\\
\le & C\ve^{1+\lambda}(1+t)^{-(3/2)+\lambda}.
\nonumber
\end{align}

Finally \eqref{3.11+} follows from \eqref{basic02}, \eqref{Con01}, and 
\eqref{Con03}.
We also obtain \eqref{3.11++} from \eqref{basic021}, \eqref{Con11}, and \eqref{Con12}.
This completes the proof.
\end{proof}

\section{Proof of Theorem \ref{Th.1.1}}
\label{proof}

Let $\vec{f}\in \bigl(C^\infty_0(\Omega)\bigr)^2$.
Then we have \eqref{SupportCond} for some $R>1$.
By the local existence theorem (see, e.g., \cite{ShiTsu86})
there exists a smooth solution $u$ to \eqref{eq}--\eqref{id}
for some $T>0$.
We write $T_\ve$ for its lifespan.
Let $u_0$ be the solution to \eqref{hw}--\eqref{idh}, and 
let $u_1$ be defined by \eqref{3.4}. 
Suppose that $0<\ve \le 1/2$.
Since we have assumed ${\mathcal O}\subset B_1(0)$,
taking the support of 
$(1-\chi_\ve)\eta$ into account,
we have $u_1(t,x)=0$ for $(t,x)\in [0, T_\ve)\times \pa\Omega$.
By the definition, we also have $u_1(t,x)=\ve u_0(t,x)$ for $0\le t\le \ve^{-1}$
and $x\in \overline{\Omega}$.
Therefore, setting
$$
u_2(t,x)=u(t,x)-u_1(t,x), 
$$
we find
\begin{align}
\label{Req}
& \dal u_2=H(u_1,u_2)-E(u_1) & \text{ in $[0,T_\ve)\times \Omega$},\\
\label{Rbc}
& u_2(t,x)=0 & \text{ for $(t,x)\in [0, T_\ve)\times \pa\Omega$,}\\
\label{Rid}
& u_2(0,x)=\pa_t u_2(0,x)=0 & \text{ for $x\in \Omega$,}
\end{align}
where $E(u_1)$ is defined by \eqref{3.3}, and $H(u_1, u_2)$ is given by
$$
H(u_1,u_2)=F(\pa u_1+\pa u_2)-F(\pa u_1).
$$
For any non-negative integer $k$, there exists a constant $C_k$ such that
\begin{equation} \label{Initialu2}
\sup_{x\in \overline{\Omega}}|u_2(0,x)|_k\le C_k \ve^2,
\end{equation}
because $\dal u_2=F(\pa u)$ for $0\le t\le \ve^{-1}$, and $u_2(0,x)=\pa_t u_2(0,x)=0$.
Note that we have $u_2(t,x)=0$ for $|x|\ge t+R$.

First of all, we show that Theorem \ref{Th.1.1} is a consequence of
the following lemma.
\begin{lm}\label{Lemma5.1}
Let $K$ be a fixed integer with $K \ge 19$.
Then for $\tau_0\in (0, \tau_*)$
there exists a pair of positive numbers 
$(M,\ve_0)=\bigl(M(K,\tau_0), \ve_0(K,\tau_0)\bigr)$
such that
the following assertion is valid:
If $0<\ve\le \ve_0$, $0<T<\min\{T_\ve, \exp(\tau_0/\ve)\}$,
and
\begin{align}\label{5.1}
\sup_{(t,x)\in [0, T] \times\overline{\Omega}} \jbx \jb{t-|x|}|\pa u_2(t,x)|_{K} \le M\ve,
\end{align}
then we have
\begin{align}\label{5.2}
\sup_{(t,x)\in [0, T] \times\overline{\Omega}} \jbx \jb{t-|x|}|\pa u_2(t,x)|_{K} \le \frac{M\ve}{2}.
\end{align}
\end{lm}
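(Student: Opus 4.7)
The plan is to close \eqref{5.2} via the weighted $L^\infty$--$L^\infty$ estimate of Proposition~\ref{main}(2) applied to $u_2 = u - u_1$. By \eqref{Req}--\eqref{Rid}, $u_2$ solves a Dirichlet wave equation on $\Omega$ with zero initial data and source $h = H(u_1, u_2) - E(u_1)$. Setting $\rho = 1$, $k = K$ in \eqref{ba4} and splitting $h = h_1 + h_2$ with $h_1 = -E(u_1)$ and $h_2 = H(u_1, u_2)$, we have
\begin{equation*}
\jb{x}\jb{t-|x|}\,|\pa u_2(t,x)|_K \le C\sum_{j=1,2}\Psi_{\mu_j}(t)\,\|h_j(t):N_{K+4}(W_{1+\mu_j,\,1-\mu_j})\|.
\end{equation*}
A natural choice is $\mu_1 = \mu\in(0,1/4]$ (so $\Psi_{\mu_1}\equiv 1$) and $\mu_2 = 0$ (so $\Psi_{\mu_2}(t)=\log(2+t)\le C\tau_0/\ve$ on $[0,\exp(\tau_0/\ve)]$ for $\ve$ small).

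For $h_1$, I would apply Proposition~\ref{Lemma3.3} with $\lambda = 0$ to get $|E(u_1)|_{K+4}\le C\ve\jb{t+|x|}^{-2-\mu}\jb{t-|x|}^{-1+\mu}$, and then observe that the elementary inequalities $\jb{x}\le\jb{t+|x|}$ and $\min(\jb{x},\jb{t-|x|})^{1-\mu}\jb{t-|x|}^{-1+\mu}\le 1$ yield $\|h_1(t):N_{K+4}(W_{1+\mu,1-\mu})\|\le C_1\ve$ uniformly on the bootstrap interval.

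For $h_2 = 2g^{ab}\pa_a u_1\pa_b u_2 + g^{ab}\pa_a u_2\pa_b u_2$, I would expand via Leibniz so that each resulting bilinear term is a product of two factors with derivative-counts summing to at most $K+4$; since $K\ge 19$, in every such product one factor has index $\le\lceil(K+4)/2\rceil\le K$ and is therefore controlled sharply either by the bootstrap \eqref{5.1} (for a $\pa u_2$-factor) or by Proposition~\ref{main}(2) (for a $\pa u_1$-factor, giving $|\pa u_1|_a\le C\ve\jb{t+|x|}^{-1}\jb{t-|x|}^{-1}$). Inserting these pointwise bounds into the weight $\jb{x}W_{1,1}(t,x)=\jb{x}\jb{t+|x|}\min(\jb{x},\jb{t-|x|})$ and examining the three regions $|x|\le t/2$, $|x|\sim t$, $|x|\ge 2t$ separately leads to $\|h_2(t):N_{K+4}(W_{1,1})\|\le C_2(M+M^2)\ve^2$, which after multiplication by $\Psi_0(t)$ contributes $\le C_2'(M+M^2)\tau_0\ve$ to the right-hand side.

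Combining the two estimates yields $\jb{x}\jb{t-|x|}|\pa u_2(t,x)|_K\le\bigl(C_1 + C_2'(M+M^2)\tau_0\bigr)\ve$, and choosing $M = M(K,\tau_0)$ first (large enough relative to $C_1$) and then $\ve_0 = \ve_0(K,\tau_0)$ small enough so that the nonlinear contribution is dominated by $M/4$ delivers \eqref{5.2}. The main technical obstacle is a derivative loss: the Leibniz expansion of $|H|_{K+4}$ demands pointwise control of $|\pa u_2|_{K+4}$, which is \emph{not} directly provided by \eqref{5.1}. The resolution is to run, in parallel with \eqref{5.1}, an auxiliary a priori bound on $|\pa u_2|_{K+4}$ at slightly weaker weights, obtained via the standard energy estimate for the non-trapping exterior mixed problem together with the elliptic estimate of Lemma~\ref{elliptic} (to trade time derivatives for spatial ones away from the boundary) and the Sobolev-type inequality of Lemma~\ref{KlainermanSobolev} (to convert the $L^2$ output back into pointwise form suited to the norm $N_{K+4}$); the delicate interplay between the sharp low-order bootstrap and this higher-order auxiliary bound is where the careful bookkeeping resides.
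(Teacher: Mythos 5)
There is a real gap at the step where you close the inequality. The crux is the size of the constant in front of the nonlinear contribution, and your stated bound cannot close \eqref{5.2}. Concretely: for the bilinear term $h_2=H(u_1,u_2)$ you obtain
$\|h_2(t)\!:\!N_{K+4}(W_{1,1})\|\le C_2(M+M^2)\ve^2$, and after multiplication by $\Psi_0(t)=\log(2+t)\lesssim \tau_0/\ve$ you arrive at $C_2'(M+M^2)\tau_0\,\ve$ in the right-hand side. This grows like $M^2\ve$, and no matter in what order you choose $M$ and then $\ve_0$, it is not dominated by $M\ve/4$ for $M$ large — the ``choose $M$ first, then $\ve_0$'' step fails. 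The linear-in-$\ve$ part (from $E(u_1)$) is $M$-independent and can be absorbed by taking $M$ large, exactly as you say; that part is correct. But there is no slack to absorb the $M^2$ in the quadratic term.

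The missing ingredient, which the paper supplies, is that the ``auxiliary a priori bound'' you invoke for the high-index factor $|\pa u_2|_{K+4}$ must come with an extra power $\ve^\gamma$, for a fixed $\gamma\in(1/4,1/2)$. The paper fixes such a $\gamma$ and runs a multi-stage argument (Subsections 5.1--5.2): an ordinary energy estimate for $\partial_t^p\partial u_2$ combined with the elliptic estimate of Lemma~\ref{elliptic} gives $\sum_{|\alpha|\le 2K}\|\pa^\alpha\pa u_2(t)\|_{L^2}\le C^*\ve^{1+\gamma}$; the $\ve^{1+\gamma}$ comes from choosing $\lambda=\gamma$ in \eqref{3.11++}, and the Gronwall factor $(1+t)^{CM\ve}$ stays bounded precisely because $\ve\log T\le\tau_0$. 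It then controls the boundary functional $J_m(t)=\int_{\pa\Omega}|\pa u_2|_m^2\,dS$ via the trace theorem and iterates (three steps, indices $2K\!-\!1\to2K\!-\!7\to2K\!-\!13$) to obtain a \emph{bounded in time} generalized energy $\TN{u_2(t)}{2K-13}\le C^*\ve^{1+\gamma}$. Only then does Lemma~\ref{KlainermanSobolev} deliver the pointwise bound $\jb{x}|\pa u_2|_{2K-15}\le C^*\ve^{1+\gamma}$. With this for the high-index factor and the bootstrap \eqref{5.1} for the low-index factor, the bilinear term yields $C^*M\ve^{2+\gamma}$; after the logarithm this is $C^*\ve^{1+\gamma}$, and since $\gamma>0$ one can finally pick $\ve_0$ small so that $C^*\ve_0^\gamma\le M/4$. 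Your sketch identifies the right tools (Proposition~\ref{main}(2), the energy machinery, the elliptic and Sobolev lemmas) and the right obstacle (derivative loss), but omits both the fractional gain $\ve^\gamma$ that makes the $M$-dependence harmless and the iterative treatment of the boundary terms that is needed to keep the generalized energy bounded; as written, the final arithmetic does not close.
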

\noindent {\it Proof of Theorem~\ref{Th.1.1}.}
We fix $\tau_0 \in (0,\tau_*)$ and an integer $K\ge 19$.
We define
$$
A_K(T)=\sup_{(t,x)\in[0,T]\times \overline{\Omega}}\, \jb{x} \jb{t-|x|} |\pa u_2(t,x)|_K
$$
for $0\le T<T_\ve$.
Note that the boundedness of $A_K$  implies that of $|\pa u|_K$,
because we have the estimate \eqref{3.11-o} for $|\pa u_1|_K$.
Taking the support of $u_2$ into account, from \eqref{Initialu2} we get
\begin{align*}
A_K(0)=\sup_{x\in \overline{\Omega}}\, \jb{x}^2 |(\pa u_2)(0,x)|_K \le C_0 \ve^2
\end{align*}
with a positive number $C_0$. 

Suppose that 
$$
0<\ve<\min\{\ve_0, M/(2C_0)\},
$$
where $M=M(K,\tau_0)$ and $\ve_0=\ve_0(K,\tau_0)$ are 
from Lemma~\ref{Lemma5.1}.
Then we have $A_K(0)\le M\ve/2$, and by the continuity of $A_K$ 
there exists a positive number $T^0$ such that
$A_K(T^0)\le M\ve$.
We define
$$
T_\ve^*=\sup\{T\in [0, T_\ve)\,;\, A_K(T)\le M \ve \}.
$$
Note that we have $T_\ve^*\ge T^0>0$. Now we suppose that 
\begin{equation}
\label{MM}
T_\ve<\exp (\tau_0/\ve).
\end{equation}
Then we get $T_\ve^*<T_\ve$, because
otherwise we have $A_K(T)\le M\ve$ for any $T\in [0, T_\ve)$,
which implies $\sup_{t\in[0,T_\ve)}|\pa u(t,x)|_K<\infty$, 
and we can extend the solution beyond $T_\ve$ by the local existence theorem.
Therefore, by the continuity of $A_K$, we have $A_K(T_\ve^*)=M\ve$;
however, since $0<T_\ve^*<T_\ve =\min\{T_\ve, \exp(\tau_0/\ve)\}$,
we can apply Lemma~\ref{Lemma5.1} to obtain $A_K(T_\ve^*)\le M\ve/2<M\ve$. 
This is a contradiction. 
Thus we conclude that \eqref{MM} is false, and we have
$T_\ve\ge \exp (\tau_0/\ve)$ for sufficiently small $\ve$. Therefore we get
$$
\liminf_{\ve\to +0} \ve\log T_\ve \ge \tau_0.
$$
Since $\tau_0 \in (0,\tau_*)$ is arbitrary, we obtain Theorem \ref{Th.1.1}.
\qed

\medskip
The rest of this section is devoted to the proof Lemma \ref{Lemma5.1}.
\medskip
 
\noindent{ \it Proof of Lemma 5.1.}
We fix $\tau_0 \in (0,\tau_*)$ and an integer $K\ge 19$.
Let $0<T < \min\{T_\ve, \exp(\tau_0/\ve)\}$
and assume that \eqref{5.1}
holds for some $M>1$ and $\ve>0$.


In the arguments below, we always suppose that $M$ 
is sufficiently large, while
$\ve$ is small enough to satisfy $M\ve \ll 1$.
$C^*$ stands for various positive constants which 
depend on $M$, $\tau_0$, and $K$,
but are independent of $T$ and $\ve$. Other positive constants, mostly denoted by $C$,
may depend on $\tau_0$ and $K$, but are independent of $M$, $T$, and $\ve$, unless otherwise stated.
\subsection{Estimates of the energy}\label{KEE1}
We fix $1/4<\gamma<1/2$.
In this subsection, we will prove that
\begin{equation}
\sup_{t\in [0, T]} \sum_{|\alpha|\le 2K} \norm{\pa^\alpha \pa u_2(t)}{L^2(\Do)}\le 
C^*\ve^{1+\gamma} \quad 
\label{ap11}
\end{equation}
holds for sufficiently small $\ve$, where $\pa=(\pa_t, \nabla_x)$, and $\alpha$ is a multi-index.
For $0\le m\le 2K$, we define
$$
z_{m}(t)=\left(\sum_{p=0}^{2K-m} \norm{\pa_t^p \pa u_2(t)}{H^m(\Do)}^2\right)^{1/2}.
$$
Then \eqref{ap11} follows from 
\begin{equation}
z_{m}(t) \le C^*\ve^{1+\gamma} \quad\text{for $t \in [0,T]$ and $0\le m\le 2K$.}
\label{bird}
\end{equation}

First we evaluate $z_0(t)$.
Let $0\le p\le 2K$.
Observe that $H(u_1,u_2)$ consists of such terms as
$(\partial_a u_i) (\partial_b u_j) $, where $a, b=0, 1, 2, 3$ and $i, j=1, 2$ with 
$(i,j) \not= (1,1)$. 
Hence (\ref{5.1}) and (\ref{3.11-o}) with $k=2K$ yield
\begin{align*}
  |\pa_t^{p} H(u_1,u_2)(t,x)| 
\le & C\left(|\pa u_1(t,x)|_{2K}+|\pa u_2(t,x)|_K\right)
        \sum_{q=0}^{2K}|\pa_t^q\pa u_2(t,x)| \\
\le & CM \ve (1+t)^{-1} 
     \sum_{q=0}^{2K} |\pa_t^{q} \pa u_2(t,x)|
\end{align*}
for $(t,x) \in [0,T] \times \overline{\Omega}$.
Therefore there exists a positive constant $C$ such that
\begin{equation}
\|\pa_t^{p} H(u_1, u_2)(t)\!:\!{L^2(\Do)}\| 
\le C M\ve (1+t)^{-1} z_{0}(t).
  \label{3.33}
\end{equation}
By \eqref{3.11++} with $\lambda=\gamma$, 
there exists a positive constant $C$ such that
\begin{align}
\|\pa_t^{p} E(u_1)(t)\!:\!{L^2(\Do)}\| 
\le C \ve^{1+\gamma} (1+t)^{-(3/2)+\gamma}.
  \label{3.33+}
\end{align}
Since (\ref{Rbc}) gives
$\partial_t^{p+1} u_2(t,x)=0$ for $(t,x)\in [0,T] \times \partial \Do$,
it follows from \eqref{3.33}, \eqref{3.33+}, and 
the energy inequality for the wave equation that 
\begin{equation}\nonumber
z_{0}(t) \le z_0(0)+\int_0^t\left(
 C_0 M\ve (1+s)^{-1}z_0(s) 
{}+C'\ve^{1+\gamma}(1+s)^{-(3/2)+\gamma}\right)ds
\end{equation}
with appropriate positive constants $C_0$ and $C'$.
The Gronwall lemma yields
\begin{align*}  
z_{0}(t) \le \left( z_{0}(0)+C'\frac{2\ve^{1+\gamma}}{1-2\gamma+2C_0M\ve}\right) 
(1+t)^{C_0 M\ve}.
\end{align*}
Since $z_0(0) \le C\ve^2$ and $\ve \log(1+T) <C(1+\tau_0)$, 
we obtain 
\begin{equation} \label{m=0}
z_0(t)\le C\ve^{1+\gamma} e^{CM(1+\tau_0)}\le C^*\ve^{1+\gamma},
\end{equation}
and \eqref{bird} for $m=0$ is proved.

Next suppose $m\ge 1$.
Then, from the definition of $z_m$, we have
\begin{align*}
z_m(t)\le & C\sum_{p=0}^{2K-m}\Bigl(\norm{\pa_t^p \pa u_2(t)}{L^2(\Do)}
{}+\sum_{1\le |\alpha|\le m}
\norm{\pa_t^p \pa_x^\alpha \pa_t u_2(t)}{L^2(\Do)}\\
&\qquad\qquad\qquad{}+\sum_{1\le |\alpha|\le m}\norm{\pa_t^p \pa_x^\alpha \nabla_x u_2(t)}{L^2(\Do)}
\Bigr)\\
\le & C\bigl(z_0(t)+z_{m-1}(t)+\sum_{p=0}^{2K-m}\sum_{2\le |\alpha|\le m+1}
\norm{\pa_t^p\pa_x^\alpha u_2(t)}{L^2(\Do)}\bigr),
\end{align*}
where we have used
$$
\sum_{1\le |\alpha|\le m}
\norm{\pa_t^p \pa_x^\alpha \pa_t u_2(t)}{L^2(\Do)}
\le C \sum_{|\alpha'|\le m-1}\norm{\pa_t^{p+1}\pa_x^{\alpha'}\nabla_x u_2(t)}{L^2(\Do)}.
$$
For $2\le |\alpha|\le m+1$, (\ref{ap10}) yields
\begin{equation}\nonumber
\|\partial_t^p \pa_x^\alpha u_2(t)\!:\!{L^2(\Do)}\|
\le C( \|\Delta_x \partial_t^{p} u_2(t)\!:\!{{H}^{m-1}(\Do) }\|
+\|\nabla_{\!x}\, \partial_t^{p} u_2(t)\!:\!{L^2(\Do)}\|).
\end{equation}
For $0\le p \le 2K-m$, we see 
that the second term on the right-hand side in the above inequality is bounded by $z_0(t)$;
using \eqref{Req}, the first term in the above is estimated by
$$
C( \|\partial_t^{p+2} u_2(t)\!:\!{{H}^{m-1}(\Do) }\|
+\|\partial_t^{p} (H(u_1, u_2)-E(u_1))(t)\!:\!{H^{m-1}(\Do)}\|),
$$
whose first and second terms are bounded by $z_{m-1}(t)$
and 
$$
 C M\ve (1+t)^{-1} z_{m-1}(t)+C\ve^{1+\gamma} (1+t)^{-(3/2)+\gamma}
$$
for $0\le p\le 2K-m$, respectively
(cf.~\eqref{3.33} and \eqref{3.33+}). In conclusion, we get
\begin{equation}\label{nanana}
 z_m(t) \le C\bigl(z_{m-1}(t)+z_0(t)+\ve^{1+\gamma}\bigr)
\end{equation}
for $m\ge 1$.
Therefore, by \eqref{m=0}
we obtain \eqref{bird} by the inductive argument with respect to $m$.

\subsection{Estimates of the generalized energy}\label{KEE2}
In this subsection we evaluate the $L^2(\Do)$-norm of the generalized derivative $\pa Z^\alpha u_2$
for $|\alpha| \le 2K-1$.
The difficulty here is that $\pa_t Z^\alpha u_2$ 
does not necessarily vanish on the boundary $\pa \Omega$.

Here we introduce notation: For a non-negative integer $m$, we define
\begin{align*}
\|\pa u_2(t)\|_m=&\left(\sum_{|\alpha|\le m}\sum_{a=0}^3 \norm{(Z^\alpha \pa_a u_2)(t)}{L^2(\Omega)}^2\right)^{1/2},\\
\TN{u_2(t)}{m}=& \left(\sum_{|\alpha|\le m}\sum_{a=0}^3\norm{(\pa_a Z^\alpha u_2)(t)}{L^2(\Omega)}^2
\right)^{1/2}.
\end{align*}
Note that we have
\begin{equation}
\label{EquiN}
C_m^{-1} \|\pa u_2(t)\|_m\le \TN{u_2(t)}{m}\le C_m \|\pa u_2(t)\|_m,
\end{equation}
where $C_m$ is a positive constant depending only on $m$.

Let $\alpha$ be a multi-index satisfying $0\le |\alpha|\le 2K-1$.
It follows from \eqref{Req} that
\begin{align}\label{ene2}
& \frac12\frac{d}{dt} 
 \int_{\Do} \left(|\partial_t Z^\alpha u_2|^2+|\nabla_{\!x}\, 
Z^\alpha u_2|^2
  \right)\,dx
\\ \nonumber
& \qquad =\int_{\Do} \left(Z^\alpha \bigl(H(u_1, u_2)-E(u_1)\bigr)\right)
\left(\partial_t Z^\alpha u_2\right)\,dx
\\ \nonumber
& \qquad \quad
 +\int_{\partial \Do} (\nu\cdot \nabla_{\!x}\, Z^\alpha u_2)\,(\partial_t
  Z^\alpha u_2)\,dS,
\end{align}
where $\nu=\nu(x)$ is the unit outer normal vector at $x \in \partial \Do$,
and $dS$ is the area element on $\partial \Do$.
Similarly to \eqref{3.33} and \eqref{3.33+},  
we obtain
\begin{align}\label{ene3}
& \|Z^{\alpha} (H(u_1, u_2)-E(u_1))(t)\!:\!{L^2(\Do)}\| 
\\ 
& \qquad \le CM\ve (1+t)^{-1} \|\pa u_2(t)\|_{m}
  +C \ve^{1+\gamma} (1+t)^{-(3/2)+\gamma}
\nonumber
\end{align}
for $|\alpha|\le m \le 2K-1$. 
From (\ref{ene2}) and (\ref{ene3}),
there exist two positive constants $C_1$ and $C_2$
such that
\begin{align}
\label{Main01}
\frac{d}{dt}\TN{u_2(t)}{m}^2\le & C_1 M\ve (1+t)^{-1}\TN{u_2(t)}{m}^2\\
& {}+C_2\ve^{1+\gamma}(1+t)^{-(3/2)+\gamma}\TN{u_2(t)}{m}+C_2J_m(t)
\nonumber
\end{align}
for $m\le 2K-1$, where 
$$
J_m(t)=\int_{\pa \Omega} |\pa u_2(t,x)|_m^2 dS.
$$

Since we have $\partial \Do \subset  B_{1}(0)$, we get
$$
|Z^\alpha \pa  u_2(t,x)| 
 \le C\sum_{|\beta| \le |\alpha|} |\partial^\beta\pa u_2(t,x)|
$$
for $(t,x)\in [0,T] \times \partial \Do$. Therefore,
by the trace theorem and \eqref{ap11} we obtain
\begin{equation}
\label{Boundary01}
J_{2K-1}(t)\le C \sum_{|\beta| \le 2K} 
\|\partial^\beta \partial u_2(t)\!:\!{L^2(\Omega\cap  B_{2}(0))}\|^2
\le C^*\ve^{2+2\gamma}.
\end{equation}
By Young's inequality, there exists a positive constant $C'$ such that
\begin{align}
\label{Young01}
& C_2\ve^{1+\gamma}(1+t)^{-(3/2)+\gamma}\TN{u_2(t)}{m} \\
& \qquad \le \frac{1}{8}(1+t)^{-1}\TN{u_2(t)}{m}^2+C'\ve^{2+2\gamma}(1+t)^{-2+2\gamma}
 \nonumber
\end{align}
for $m\le 2K-1$.
We suppose that $\ve$ is small enough to satisfy $C_1M\ve\le 1/4$. 
Then it follows from \eqref{Main01}, \eqref{Boundary01}, and \eqref{Young01} that
\begin{align*}
\frac{d}{dt}\TN{u_2(t)}{2K-1}^2 \le &
\frac{1}{2} (1+t)^{-1} \TN{u_2(t)}{2K-1}^2\\
&{}+C'\ve^{2+2\gamma} (1+t)^{-2+2\gamma}+C^*\ve^{2+2\gamma}, 
\end{align*}
from which we obtain
\begin{align*}
(1+t)^{-1/2}\TN{u_2(t)}{2K-1}^2\le  \TN{u_2(0)}{2K-1}^2+C^*\ve^{2+2\gamma}(1+t)^{1/2}.
\end{align*}
Therefore we get
\begin{equation}\label{ap15}
\TN{u_2(t)}{2K-1}\le C^* \ve^{1+\gamma} (1+t)^{1/2}
\quad\text{for}\ t \in [0,T].
\end{equation}

Now we are going to estimate $J_{2K-7}(t)$ to obtain a better estimate for $\TN{u_2(t)}{2K-7}$.
This time we employ the pointwise estimate to get a bound of $J_{2K-7}(t)$:
In the following, we set $r=|x|$. We define
$$
{\mathcal W}_-(t,r)=\min\{\jb{r}, \jb{t-r}\}.
$$
Note that
$$
 \jb{r}^{-1}\jb{t-r}^{-1}\le C \jb{t+r}^{-1} {\mathcal W}_-^{-1}(t,r).
$$
In view of \eqref{EquiN},
by (\ref{ap21}) 
and (\ref{ap15}) we have
\begin{align}\label{ap21-}
 \jb{r} |\partial u_2(t,x)|_{2K-3} 
 \le  C^*\ve^{1+\gamma} (1+t)^{1/2}.
\end{align}
It follows from \eqref{3.11-o}, \eqref{5.1}, 
and \eqref{ap21-} that 
$$
|H(u_1, u_2)(t,x)|_{2K-3} 
\le C^* \ve^{2+\gamma} (1+t)^{1/2} \jb{r}^{-1} \jb{t+r}^{-1}
{\mathcal W}_-^{-1}(t,r).
$$
Thus we find
\begin{equation}
\label{error0}
 \|H(u_1, u_2)(t)\!:\! N_{2K-3}(W_{1,1})\|
 \le C^*  \ve^{2+\gamma} (1+t)^{1/2}.
\end{equation}
We fix $0<\mu\ll 1$.
Recalling \eqref{SupportA}, from \eqref{3.11+} with $\lambda=\gamma$ 
we get
\begin{align*}
|E(u_1)(t,x)|_{2K-3}\le & C \ve^{1+\gamma}\jb{t+r}^{-2+\gamma-\mu}\jb{t-r}^{-1+\mu}\\
\le & C\ve^{1+\gamma}(1+t)^{\gamma}\jb{r}^{-1}\jb{t+r}^{-1-\mu}\jb{t-r}^{-1+\mu}
\end{align*}
for $(t,x)\in \supp E(u_1)$,
which yields
\begin{align}\label{error1}
\|E(u_1)(t)\!:\! N_{2K-3}(W_{{1+\mu}, {1-\mu}})\|
 \le C  \ve^{1+\gamma}(1+t)^{\gamma}.
\end{align}
Applying \eqref{ba4} 
with $h_1=H(u_1, u_2)$, $h_2= -E(u_1)$, $\rho=1$, $\mu_1=0$ and 
$\mu_2=\mu(>0)$, we obtain
\begin{align}
\label{KKK01}
 & \jb{x}\jb{t-|x|} |\pa u_2(t,x)|_{2K-7}
\\
 & \qquad 
\le C\ve^2+C^* \ve^{2+\gamma} (1+t)^{1/2} \log(2+t)
   +C\ve^{1+\gamma}(1+t)^{\gamma}.
\nonumber
\end{align}
Since $\ve\log (2+t)\le C(1+\tau_0)$ and $\gamma<1/2$, we obtain
\begin{align}
|\pa u_2(t,x)|_{2K-7}\le & C^* \ve^{1+\gamma}(1+t)^{-1/2}
\label{KataKata04}
\end{align}
for $(t,x)\in [0, T] \times \overline{\Do}$.
Since $\pa \Omega$ is bounded, from \eqref{KataKata04} we get
\begin{align}
\label{LEK5}
J_{2K-7}(t)
\le & C\norm{|\pa u_2(t)|_{2K-7}}{L^\infty(\pa\Do)}^2
\le C^*  \ve^{2+2\gamma} (1+t)^{-1}.
\end{align}

Now suppose that $\ve$ is small enough to satisfy $C_1M\ve\le 1/8$.
Then it follows from \eqref{Main01}, \eqref{Young01}, and \eqref{LEK5} that
\begin{align}
\label{Main02}
\frac{d}{dt}\TN{u_2(t)}{2K-7}^2 \le &
\frac{1}{4}(1+t)^{-1} \TN{u_2(t)}{2K-7}^2 
\\ 
& {}+C' \ve^{2+2\gamma} (1+t)^{-2+2\gamma}+C^* \ve^{2+2\gamma} (1+t)^{-1}.
\nonumber
\end{align}
From \eqref{Main02} we obtain
$$
(1+t)^{-1/4}\TN{u_2(t)}{2K-7}^2\le \TN{u_2(0)}{2K-7}^2+C^*\ve^{2+2\gamma},
$$
which leads to
\begin{equation}
\label{ap17}
\TN{u_2(t)}{2K-7}\le C^*\ve^{1+\gamma}(1+t)^{1/8}. 
\end{equation}

We repeat the above procedure once again to estimate $J_{2K-13}(t)$, 
with replacing \eqref{ap15} by \eqref{ap17}:
By \eqref{ap21} and \eqref{ap17}, we get 
\begin{equation}
\jb{r}|\pa u_2(t,x)|_{2K-9} \le C^*\ve^{1+\gamma}(1+t)^{1/8},
\end{equation}
which yields
\begin{equation}
\label{error3}
 \|H(u_1, u_2)(t)\!:\! N_{2K-9}(W_{1,1})\|
 \le C^*  \ve^{2+\gamma} (1+t)^{1/8},
\end{equation}
similarly to \eqref{error0}.
Applying \eqref{ba4}, it follows from \eqref{error1} and \eqref{error3} that
\begin{align*}
 & \jb{x}\jb{t-|x|} |\pa u_2(t,x)|_{2K-13}
\\
 & \qquad 
\le C\ve^2+C^* \ve^{2+\gamma} (1+t)^{1/8} \log(2+t)
   +C\ve^{1+\gamma}(1+t)^{\gamma}.
\end{align*}
Since we have $(1+t)^{1/8}\log(2+t)\le C(1+t)^\gamma$ for $\gamma>1/4$,
we obtain
\begin{align}
|\pa u_2(t,x)|_{2K-13}\le & C^* \ve^{1+\gamma}(1+t)^{-1+\gamma},
\label{KataKata05a}
\end{align}
which leads to
\begin{equation}
\label{Boundary03}
J_{2K-13}(t)\le C^* \ve^{2+2\gamma}(1+t)^{-2+2\gamma}.
\end{equation}

By \eqref{Main01} and \eqref{Boundary03}, we obtain
\begin{align}
\label{Main02a}
\frac{d}{dt}\TN{u_2(t)}{2K-13}^2\le & 
C_1 M\ve (1+t)^{-1}\TN{u_2(t)}{2K-13}^2\\
& {}+C_2\ve^{1+\gamma}(1+t)^{-(3/2)+\gamma}\TN{u_2(t)}{2K-13}
\nonumber\\
& {}+C^*\ve^{2+2\gamma}
(1+t)^{-2+2\gamma}.
\nonumber
\end{align}
We put $I_{\ve}(t)=\sqrt{\ve^{2+2\gamma}+\TN{u_2(t)}{2K-13}^2}$. Then, for $\ve>0$, we have
$$
\frac{d}{dt}I_{\ve}(t)=\left(2I_{\ve}(t)\right)^{-1} \frac{d}{dt} \left(I_{\ve}(t)\right)^2
=\left(2I_{\ve}(t)\right)^{-1}\frac{d}{dt}\TN{u_2(t)}{2K-13}^2.
$$
Hence \eqref{Main02a} yields
\begin{align}
\label{Main03}
\frac{d}{dt}I_{\ve}(t)
\le & \frac{C_1}{2} M\ve (1+t)^{-1} I_{\ve}(t) \\
& {}+\frac{C_2}{2}\ve^{1+\gamma}(1+t)^{-(3/2)+\gamma}+\frac{C^*}{2}\ve^{1+\gamma}(1+t)^{-2+2\gamma},
\nonumber
\end{align}
where we have used $\TN{u_2(t)}{2K-13}\le I_\ve(t)$ to obtain the first and second terms on the right-hand side 
of \eqref{Main03},
while $\ve^{1+\gamma}\le I_\ve(t)$ has been
 used to obtain the third term.
Since $-2+2\gamma<-(3/2)+\gamma$, by \eqref{Main03} we get
\begin{equation}
\label{C03}
(1+t)^{-C_1'M\ve}I_{\ve}(t)\le  I_{\ve}(0)+C^*\ve^{1+\gamma}\int_0^t (1+s)^{-C_1'M\ve-(3/2)+\gamma} ds,
\end{equation}
where $C_1'=C_1/2$.
It is easy to see that 
\begin{equation}
\label{C01}
I_{\ve}(0)\le \sqrt{\ve^{2+2\gamma}+\TN{u_2(0)}{2K-13}^2}\le \sqrt{\ve^{2+2\gamma}+C\ve^4}
\le C\ve^{1+\gamma}.
\end{equation}
A direct calculation yields
\begin{equation}
\label{C02}
\int_0^t (1+s)^{-C_1'M\ve-(3/2)+\gamma} ds\le \frac{2}{1-2\gamma}.
\end{equation}
Since we have $(1+t)^{C_1'M\ve}\le Ce^{CM(1+\tau_0)}$, from \eqref{C03}, \eqref{C01}, and \eqref{C02} 
we obtain
\begin{equation}
\label{C09}
\TN{u_2(t)}{2K-13}\le I_{\ve}(t)\le C^* \ve^{1+\gamma}.
\end{equation}

\subsection{Pointwise estimates}
 We see from \eqref{ap21} and \eqref{C09}
 that
\begin{align}\label{KataKata05}
 \jb{r} |\partial u_2(t,x)|_{2K-15}
\le  C^* \ve^{1+\gamma}.
\end{align}
It follows from \eqref{3.11-o}, \eqref{5.1}, 
and \eqref{KataKata05} that 
\begin{align}
|Z^\alpha H(u_1, u_2)(t,x)| 
\le C^*  \ve^{2+\gamma} \jb{r}^{-1} \jb{t+r}^{-1} 
   {\mathcal W}_-^{-1}(t,r)
\nonumber
\end{align}
for $|\alpha| \le 2K-15$, which leads to
\begin{equation}
\label{error5}
 \|H(u_1, u_2)(t)\!:\! N_{2K-15}(W_{1,1})\|
 \le C^*  \ve^{2+\gamma}.
\end{equation}
From \eqref{3.11+} with $\lambda=0$, we obtain 
$$
|E(u_1)(t,x)|_{2K-15}\le C\ve\jb{t+r}^{-2-\mu}\jb{t-r}^{-1+\mu},
$$
which yields
\begin{equation}
\label{error6}
 \|E(u_1)(t)\!:\! N_{2K-15}(W_{1+\mu,1-\mu})\|
 \le C_3 \ve
\end{equation}
for $0<\mu\ll 1$, where $C_3$ is a positive constant independent of $M$;
this independence is quite important in the following arguments.

Applying \eqref{ba4} with $h_1=H(u_1,u_2)$, $h_2= -E(u_1)$, $\mu_1=0$ and $\mu_2=\mu>0$,
from \eqref{error5} and \eqref{error6}
we obtain
\begin{equation}
\jb{x}\jb{t-|x|} |\pa u_2(t,x)|_{2K-19}\le C_4(\ve^2+C_3\ve)+C^*\ve^{2+\gamma}\log(2+t)
\label{KataKata07}
\end{equation}
for $(t,x)\in [0,T] \times \overline{\Do}$,
where $C_4$ is a positive constant independent of $M$.

Since $K\le 2K-19$, and $\ve\log(2+t)\le C(1+\tau_0)$, \eqref{KataKata07}
yields 
\begin{equation}
\label{Finale01}
\jb{x}\jb{t-|x|} |\pa u_2(t,x)|_{K}\le C_4(C_3+\ve)\ve+C^*\ve^{1+\gamma}.
\end{equation}
Finally, suppose that $M\ge 4C_4(C_3+1)$, and that a positive constant $\ve_0$ 
is small enough to satisfy $C^*\ve_0^\gamma \le M/4$. 
Then \eqref{Finale01} leads to
\begin{equation}
\label{Finale02}
\jb{x}\jb{t-|x|} |\pa u_2(t,x)|_{K}\le \frac{M\ve}{2}
\end{equation}
for $0<\ve\le \ve_0$. 
This completes the proof of Lemma \ref{Lemma5.1}.
\hfill$\qed$

%
\begin{remark}
\label{GeneralData}
\normalfont For simplicity, we have posed the initial condition \eqref{id}
with $\vec{f}=(f_0,f_1)\in \bigl(C^\infty_0(\Omega)\bigr)^2$. 
We can 
generalize the initial condition in the following way:
Suppose that $\vec{f}$ is a smooth function vanishing for large $|x|$,
and that $(\vec{f},0)$ satisfies the compatibility condition of infinite
order (see Remark~\ref{Two}). Let $\bigl(\vec{g}_\ve\bigr)_{\ve \in [0,1]}$
be a family of $C^\infty$ functions on $\overline{\Omega}$ satisfying the following properties:
\begin{itemize}
\item There exists $R>0$ such that $\vec{g}_\ve(x)=0$ for $|x|\ge R$ and $\ve\in [0,1]$.
\item For any $k\in \N$, there exists a positive constant $C_k$
such that
 \begin{equation}
  \label{NewData}
  \sum_{|\alpha|\le k} \sup_{x\in \overline{\Omega}} |\pa_x^\alpha (\vec{g}_\ve-\vec{f}\,)(x)|\le C_k\ve
 \end{equation}
for all $\ve\in [0,1]$. Thus especially we have $\vec{g}_0=\vec{f}$.
\item For each $\ve\in (0,1]$, $\bigl(\ve\vec{g}_\ve, F(\pa u)\bigr)$ satisfies the nonlinear version of compatibility condition of infinite order, that is to say, $(\pa_t^j u)(0, x)$ vanishes for $x\in \pa\Omega$
for any non-negative integer $j$, where $(\pa_t^j u)(0, x)$ is formally determined by
the nonlinear equation \eqref{eq} and the initial condition
\begin{equation}
 \label{id'} \bigl(u(0,x), \pa_t u(0,x)\bigr)=\ve \vec{g}_\ve(x),\quad x\in \Omega.
\end{equation}
\end{itemize} 
Then for the lifespan $T_\ve$ 
of the solution $u$ to the problem \eqref{eq}, \eqref{dc}, and \eqref{id'}, 
we can show that \eqref{MainInf} holds true.
Indeed, keeping the definitions of $u_1$ and $u_2$ unchanged,
we only have to replace \eqref{Rid} with 
$$
 \bigl(u_2(0,x), \pa_t u_2(0,x)\bigr)=\ve (\vec{g}_\ve-\vec{f}\,) (x).
$$
Then, since we still have \eqref{Initialu2} by \eqref{NewData}, it is easy to
modify the proof of Theorem~\ref{Th.1.1}.
\end{remark}
%
\section{Proof of Theorem~\ref{Upper}}\label{BlowUp}
Throughout this section we assume
$$
\Omega=\{x\in \R^3\,;\, |x|>1\}.
$$
We also suppose that $\vec{f}=(f_0,f_1)\in \bigl(C^\infty_0(\Omega)\bigr)^2$, and that it is radially symmetric,
i.e., $f_i(x)=f_i^*(|x|)$ for $x\in \Omega$ with some functions $f_i^*=f_i^*(r)$ for
$i=0$, $1$. Note that we may regard 
$f_i^*$ as a $C^\infty$-function on $[1,\infty)$
by putting $f_i^*(1)=0$.

We start this section with the explicit representation of
the radially symmetric solution $u$ to the Dirichlet problem
\begin{equation}
\label{RadProb}
 \begin{cases}
 \dal u(t,x)=h(t,x) & \text{for $(t,x)\in [0,\infty)\times \Omega$,}\\
 u(t,x)=0 & \text{for $(t,x)\in [0,\infty)\times \pa \Omega$,}\\
 u(0,x)=f_0(x),\ (\pa_t u)(0,x)=f_1(x) & \text{for $x\in \Omega$}.
 \end{cases}
\end{equation}
Suppose that $h$ is smooth. 
Assume that $(0,h)$ satisfies the compatibility condition of infinite order
(see Remark~\ref{Two}), and $h(t,x)=h^*(t,|x|)$ with some function $h^*=h^*(t,r)$. 
Then the solution $u$ to \eqref{RadProb} also is radially symmetric in $x$-variable,
and we can write $u(t,x)=u^*(t,|x|)$ with some function $u^*=u^*(t,r)$. 
For $i=0,1$, and $r\in \R$, we define
$$
\check{f}_i^*(r)=
\begin{cases}
rf_i^*(r) & \text{if $r>1$},\\
-(2-r)f_i^*(2-r) & \text{if $r\le 1$}.
\end{cases}
$$
Similarly we define
$$
\check{h}^*(t,r)=
\begin{cases}
rh^*(t, r) & \text{if $r>1$},\\
-(2-r)h^*(t, 2-r) & \text{if $r\le 1$}.
\end{cases}
$$
for $t\in [0,T)$. Then we have the following expression for the solution $u$:
\begin{align}
\label{RadExp}
ru^*(t,r)=&\frac{\check{f}_0^*(r-t)+\check{f}_0^*(r+t)}{2}+
\frac{1}{2}\int_{r-t}^{r+t}\check{f}_1^*(\rho)d\rho\\
&{}+\frac{1}{2}\int_{0}^t\left(\int_{r-(t-\sigma)}^{r+(t-\sigma)} \check{h}^*(\sigma,\rho)d\rho\right) d\sigma\nonumber
\end{align}
for $t\ge 0$ and $r\ge 1$.

Let $u_0$ be the solution to \eqref{RadProb} with $h\equiv 0$. Then \eqref{RadExp} implies that
${\mathcal F}_+$ defined by \eqref{DefRadiation} is given by
\begin{align}
\label{RRadExp}
{\mathcal F}_+(s,\omega)=&\frac{1}{2}\left(\frac{d}{ds}\check{f}_0^*(s)-\check{f}_1^*(s)\right).
\end{align}

Now we start the proof of Theorem~\ref{Upper}.
We just look at the region where the effect of the boundary does not appear at all.
Hence the arguments go like the case of the Cauchy problem.
\medskip\\
{\it Proof of Theorem~\ref{Upper}.}
Let $F=c(\pa_t u)^2$ with $c\ne 0$, and let $u$ be the solution to \eqref{eq}--\eqref{id},
whose lifespan is $T_\ve$.
Without loss of generality, we may assume $c>0$. Indeed we only have to
replace $u$ with $-u$ in the following arguments when $c<0$.

Assume that $f_0=0$, $f_1\in C^\infty_0(\Omega)$, and $f_1(x)=f_1^*(|x|)$ for $|x|>1$.
Suppose that $f_1^*(r)\ge 0$ for $r>1$, and $f_1^*\not\equiv 0$.
Then $\tau_*$ given by \eqref{DefTau} is
\begin{equation}\label{DefTau*}
\tau_*=\left(\sup\left\{\frac{c\check{f}_1^*(s)}{4}\,;\, s\in \R \right\}\right)^{-1}
\end{equation}
because of \eqref{RRadExp}. Observing that
$\check{f}_1^*(s)=s f_1^*(s)\ge 0$ for $s>1$, and $\check{f}_1^*(s)=-(2-s)f_1^*(2-s)\le 0$
for $s\le 1$, we find 
\begin{equation}\label{DefTau**}
\tau_*=\left(\sup\left\{\frac{cs{f}_1^*(s)}{4}\,;\, s\ge 1\right\}\right)^{-1}.
\end{equation}

Since one can show that $u$ is radially symmetric in $x$,
we can write $u(t,x)=u^*(t,|x|)$. Then
\eqref{RadExp} leads to
\begin{align}
\label{RadExp01}
r \pa_t u^*(t,r)=&\ve \frac{(r+t)f_1^*(r+t)+(r-t)f_1^*(r-t)}{2}\\
&{}+\frac{c}{2} \int_0^t(r+t-\sigma)(\pa_t u^*)^2(\sigma, r+t-\sigma)d\sigma
\nonumber\\
& {}+\frac{c}{2} \int_0^t(r-t+\sigma)(\pa_t u^*)^2(\sigma, r-t+\sigma)d\sigma
\nonumber
\end{align}
for $r-t\ge 1$.
We put $U(t,s)=(t+s)(\pa_t u^*)(t, t+s)$ for $t\ge 0$ and $s\ge 1$. Then,
by putting $r-t=s$ in \eqref{RadExp01}, we get
\begin{equation}
\label{RadExp02}
U(t,s)\ge \frac{1}{2} \ve s f_1^*(s)+\frac{c}{2}\int_0^t(\sigma+s)^{-1} U^2(\sigma, s) d\sigma=:V(t,s)
\end{equation}
for $t\ge 0$ and $s\ge 1$. From this we obtain
\begin{equation}
\frac{\pa}{\pa t}V(t,s)=\frac{c}{2}(t+s)^{-1} U^2(t,s)\ge \frac{c}{2}(t+s)^{-1} V^2(t,s).
\end{equation}
Finally we reach at
$$
V(t,s)\ge \frac{2V(0,s)}{2-cV(0,s)\log\bigl((t+s)/s \bigr)},
$$
which immediately implies
\begin{equation}
\label{RadExp03}
U(t,s)\ge \frac{2 \ve s f_1^*(s)}{4- \ve csf_1^*(s) \log\bigl((t+s)/s \bigr)}.
\end{equation}
The right-hand side of \eqref{RadExp03} blows up to infinity as 
$$
t\to s\exp\left(\frac{4}{cs f_1^*(s)}\ve^{-1}\right)-s,
$$ unless $f_1^*(s)=0$.
Suppose that $f_1^*(r)=0$ for $r\ge R$. Then, recalling \eqref{DefTau**}, we obtain
\begin{align*}
T_\ve \le & \inf_{s\ge 1; f_1^*(s)\ne 0}\left\{s\exp\left(\frac{4}{cs f_1^*(s)}\ve^{-1}\right)-s\right\}\\
\le & R \inf_{s\ge 1; f_1^*(s)\ne 0} \exp\left(\frac{4}{cs f_1^*(s)}\ve^{-1}\right)
=R\exp (\tau_{*}\ve^{-1}),
\end{align*}
which yields \eqref{upperbound} immediately.
This completes the proof.
\qed
\section*{Appendix: Proof of \eqref{ba4}}
\setcounter{equation}{0}
\setcounter{lm}{0}
\renewcommand{\theequation}{A.\arabic{equation}}
\renewcommand{\thelm}{A.\arabic{lm}}
We denote the solution $u$ to the mixed problem \eqref{MP} by
$S[(\vec{f}, h)]$. We set 
$K[\vec{f}\,]=S[(\vec{f},0)]$, 
and $L[h]=S\bigl[\bigl((0,0),h\bigr)\bigr]$.
Similarly, for a function $\vec{f^0}=(f_0^0,f_1^0)$
on $\R^3$ and a function $h^0$ on $[0,\infty)\times\R^3$,
$S_0[(\vec{f^{0}}, h^{0})]$ denotes
the solution $u^{0}$ to the Cauchy problem $\dal {u^{0}}={h^{0}}$ 
in $(0,\infty)\times \R^3$ with initial data
$({u}^{0},\pa_t {u}^{0})=\vec{f^{0}}\bigl(=({f}_0^{0}, {f}_1^{0})\bigr)$
at $t=0$. 
We also define $K_0[\vec{f^{0}}]=S_0[(\vec{f^{0}},0)]$, 
and $L_0[{h}^{0}]=S_0\bigl[\bigl((0,0),{h}^{0}\bigr)\bigr]$.

Now we start the proof of \eqref{ba4}. Let $\vec{f}$ and $h(=h_1+h_2)$ be as in 
Proposition~\ref{main}.
Following \cite{KatKub08MSJ}, we use the cut-off method to prove \eqref{ba4}
(see also \cite{ShiTsu86}):
For $m>0$, let $\psi_m=\psi_m(x)$ be a radially symmetric function satisfying
$\psi_m(x)=0$ for $|x|\le m$, and $\psi_m(x)=1$ for $|x|\ge m+1$; then we get
\begin{equation}
S[\Xi](t,x)=\psi_1(x)S_0[\psi_2\Xi](t,x)+\sum_{i=1}^4S_i[\Xi](t,x)
\end{equation}
for $\Xi=(\vec{f}, h)$, where
\begin{align}
S_1[\Xi](t,x)=& \bigl(1-\psi_2(x)\bigr)L\bigl[
                                          [\psi_1, -\Delta_x]S_0[\psi_2\Xi]
                                        \bigr](t,x),\\
S_2[\Xi](t,x)=& -L_0\left[ [\psi_2,-\Delta_x] L\bigl[
                                          [\psi_1, -\Delta_x]S_0[\psi_2\Xi]
                                        \bigr]
                     \right](t,x),\\
S_3[\Xi](t,x)=& \bigl(1-\psi_3(x)\bigr)S[(1-\psi_2)\Xi](t,x),\\
S_4[\Xi](t,x)=& -L_0\bigl[ [\psi_3,-\Delta_x] S[(1-\psi_2)\Xi]\bigr](t,x).                 
\end{align}
Here we have regarded $\Xi(t,x)$ as $0$ if $x\not\in \Omega$.

Observe that we have $\supp (1-\psi_m)\subset \{|x|\le m+1\}$ and 
$\supp \pa_a \psi_m\subset \{m\le |x|\le m+1\}$.

We put $\zeta_0=S_0[\psi_2\Xi]$. Then taking the support of $\pa_a\psi_1$ into account,
we get
\begin{align}
\label{A01}
\jb{x}\jb{t-|x|}^\rho \left|\pa_a\bigl(\psi_1\zeta_0\bigr)\right|_k
\le & C\jb{x}\jb{t-|x|}^\rho|\pa_a\zeta_0|_k\\
& {}+C|\pa_a\psi_1(x)|_k\jb{t}^\rho\sum_{|\beta|\le k}
|\pa^\beta\zeta_0|. \nonumber
\end{align}
To estimate the right-hand side of \eqref{A01}, we use the estimates for the Cauchy problem of the wave equation in $[0,\infty)\times \R^3$.
First we introduce some notation:
For $\nu\in \R$, we set
\begin{equation}
\label{defPhi}
{\Phi}_\nu(t,x)=
   \begin{cases}
        \langle t+|x|\rangle^{\nu} & \text{ if } \nu<0,
\\
       \left\{\log \bigg(2+\displaystyle\frac{\langle t+|x|\rangle}{\langle t-|x|\rangle}\bigg)\right\}^{-1}
         & \text{ if } \nu=0,
\\
       \langle t-|x|\rangle^{\nu}   & \text{ if } \nu>0.  
   \end{cases}
\end{equation}
For $\rho>0$ and a non-negative integer $k$, we define
\begin{equation}
{\mathcal B}_{\rho,k}[\vec{f^{0}}]=\sup_{x\in \R^3}\jb{x}^{\rho}\bigl(|f^{0}_0(x)|_k
{}+|\nabla_x f_0^{0}(x)|_k
{}+|f_1^{0}(x)|_{k}\bigr)
\end{equation}
(cf.~\eqref{HomWei}). We also define
\begin{equation}
\norm{{h}^{0}(t)}{M_k({\mathcal W})}
=\sup_{(s,t)\in[0,t]\times \R^3} \jb{x}{\mathcal W}(s,x)
|h^{0}(s,x)|_k
\end{equation}
for any non-negative function ${\mathcal W}$ (cf.~\eqref{NfW}). 
Recall the definitions \eqref{DefPsi} and \eqref{defW} 
for $\Psi_\nu$ and $W_{\nu,\kappa}$.
\begin{lm}\label{CauchyDecay}
Let $\rho>0$ and $\mu\ge 0$. Let $k$ be a non-negative integer. 
Then we have
\begin{align}
& 
\label{Asakura}
 \jb{t+|x|}\Phi_{\rho-1}(t,x)|K_0[\vec{f^{0}}](t,x)|_k
 \le C {\mathcal B}_{\rho+1,k}[\vec{f^{0}}], \\
& \label{Kat}
 \jb{t+|x|}\Phi_{\rho-1}(t,x)|L_0[h^{0}](t,x)|_k
\\
& \qquad\qquad\qquad\qquad\qquad\quad
\le C\Psi_\mu(t)
\norm{{h}^{0}(t)}{M_k(W_{\rho+\mu,1-\mu})}.
\nonumber
\end{align}
Moreover, if $\rho\ge 1$, then we have
\begin{equation} 
\label{Yok}
 \jb{x}\jb{t-|x|}^{\rho} |(\pa L_0[h^{0}])(t,x)|_k
\le C\Psi_\mu(t)\norm{{h}^{0}(t)}{M_{k+1}(W_{\rho+\mu,1-\mu})}.
\end{equation}
\end{lm}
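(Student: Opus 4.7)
The plan is to reduce all three estimates to the case $k=0$ via the commutation property $[Z_a,\dal]=0$, and then apply the explicit Kirchhoff-Duhamel representation formulas for the free 3D wave equation. For \eqref{Asakura}, applying $Z^\alpha$ to the Cauchy problem for $K_0[\vec{f^0}]$ yields a free wave whose Cauchy data's pointwise size is controlled by ${\mathcal B}_{\rho+1,k}[\vec{f^0}]$ (after using $\dal v=0$ to trade any $\pa_t^2$ factors for $\Delta_x$). For \eqref{Kat}--\eqref{Yok}, applying $Z^\alpha$ to $L_0[h^0]$ gives a Cauchy problem with source $Z^\alpha h^0$; its initial data involves time derivatives of $h^0$ at $t=0$, which I would again rewrite as spatial derivatives using $\dal v=h^0$, absorbing them into the $M_{k+1}$-norm of $h^0$.

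Once reduced to $k=0$, I would prove \eqref{Asakura} from Kirchhoff's formula
\[
K_0[\vec{f^0}](t,x)=\frac{1}{4\pi t}\int_{|y-x|=t}\left(tf_1^{0}(y)+f_0^{0}(y)+\nabla f_0^{0}(y)\cdot(y-x)\right)dS_y,
\]
combined with the pointwise bound $|f_0^{0}(y)|+|\nabla f_0^{0}(y)|+|f_1^{0}(y)|\le C{\mathcal B}_{\rho+1,0}[\vec{f^0}]\,\jb{y}^{-\rho-1}$ and the elementary spherical-mean estimate $t\sup_{|y-x|=t}\jb{y}^{-\rho-1}\le C\jb{t+|x|}^{-1}\Phi_{\rho-1}(t,x)^{-1}$, with the logarithmic endpoint $\rho=1$ handled by an explicit integration in the polar angle. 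For \eqref{Kat} I would apply Duhamel's formula
\[
L_0[h^{0}](t,x)=\frac{1}{4\pi}\int_0^t\int_{|y-x|=t-s}\frac{h^{0}(s,y)}{t-s}\,dS_y\,ds,
\]
insert the pointwise bound $|h^{0}(s,y)|\le C\jb{y}^{-1}W_{\rho+\mu,1-\mu}(s,y)^{-1}\|h^{0}(t):M_0(W_{\rho+\mu,1-\mu})\|$, and reduce matters to the cone integral
\[
\int_0^t\int_{|y-x|=t-s}\frac{dS_y\,ds}{(t-s)\jb{y}\jb{s+|y|}^{\rho+\mu}\min\{\jb{y},\jb{s-|y|}\}^{1-\mu}}.
\]
Parametrizing the backward cone by $(\lambda,\sigma,\omega)=(s-|y|,s+|y|,y/|y|)$, the double integral collapses after integration in $\sigma$ and $\omega\in S^2$ to a one-dimensional integral in $\lambda$, which the standard John-Asakura-type analysis bounds by $C\Psi_\mu(t)\jb{t+|x|}^{-1}\Phi_{\rho-1}(t,x)^{-1}$. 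For \eqref{Yok} I would differentiate under the Duhamel integral; tangential components of $\pa_a$ along the sphere $|y-x|=t-s$ are harmless, while the radial and time components, after an integration by parts in $s$, produce an additional $\jb{t-|x|}^{-1}$ from the explicit dependence of the integrand on $\lambda=s-|y|$ near the outgoing ray $|x|=t$, at the cost of one extra derivative on $h^{0}$, which is exactly what the $M_{k+1}$ norm supplies.

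The main obstacle I foresee is the sharp evaluation of the cone integral at the logarithmic endpoints: the case $\rho=1$ in $\Phi_{\rho-1}$ and $\mu=0$ in $\Psi_\mu$. Extracting exactly the factors $\log(2+t)$ and $\{\log(2+\jb{t+|x|}/\jb{t-|x|})\}^{-1}$ forces a careful partition of the cone into the regions $\{|y|\le s/2\}$, $\{|y|\ge s/2,\,|s-|y||\le\jb{y}\}$ and $\{|s-|y||\ge\jb{y}\}$, together with delicate bookkeeping of the competing factors $\jb{y}$ and $\jb{s-|y|}$ coming from the $\min$ in $W_{\rho+\mu,1-\mu}$; everything else is routine Kirchhoff-Duhamel manipulation.
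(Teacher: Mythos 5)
The paper does not prove Lemma~\ref{CauchyDecay} at all: it simply refers the reader to the literature. Estimate \eqref{Asakura} is attributed to Asakura's Proposition~1.1, \eqref{Kat} for $\mu>0$ to Kubota--Yokoyama's Theorem~3.4, \eqref{Yok} to Yokoyama's Proposition~3.1(iii), and the endpoint case $\rho=1$, $\mu=0$ of \eqref{Kat} (the only case actually used in the body of the paper) to Lemma~2.6 of \cite{Kat04:02}; see also Lemmas~3.2--3.4 of \cite{KatKub08MSJ}. Your proposal instead reconstructs the underlying argument, which is indeed the content of those references: reduce to $k=0$ via $[Z_a,\dal]=0$ (using $\dal v=0$ or $\dal v=h^0$ to trade excess $\pa_t$'s for spatial data of the correct order, at which point the $K_0$ pieces arising from the time-derivative components of $Z^\alpha$ are controlled by \eqref{Asakura} with the decay supplied by $W_{\rho+\mu,1-\mu}(0,x)=\jbx^{\rho+1}$), then run Kirchhoff/Duhamel with the cone change of variables and the John--Asakura splitting. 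What your route buys is a self-contained treatment; what the paper's route buys is brevity and a guarantee that the sharp endpoint $\mu=0$, $\rho=1$ (which needs the precise $\log(2+t)$ loss rather than a power) is proved somewhere.

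Two technical slips in your sketch are worth flagging, though neither is fatal to the outline. First, your Kirchhoff formula should carry the prefactor $\tfrac{1}{4\pi t^2}$ rather than $\tfrac{1}{4\pi t}$, and the ``spherical-mean estimate'' must genuinely be an estimate on the surface average
$\tfrac{1}{4\pi t^2}\int_{|y-x|=t}\jb{y}^{-\rho-1}\,dS_y$
rather than on $t\,\sup_{|y-x|=t}\jb{y}^{-\rho-1}$: the sup fails near $|x|\approx t$ because the small neighbourhood of the origin contributing the sup occupies only an $O(t^{-2})$ fraction of the sphere. Passing to the variables $\lambda=|y|$ and using both bounds $\int_{|t-|x||}^{t+|x|}\jb{\lambda}^{-\rho}d\lambda\le C\jb{t-|x|}^{1-\rho}$ and $\le 2|x|\,\jb{t-|x|}^{-\rho}$ is what produces $\jb{t+|x|}^{-1}\Phi_{\rho-1}(t,x)^{-1}$ uniformly in $|x|/t$, which is the content of Asakura's lemma. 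Second, for \eqref{Kat} the commutator bookkeeping lands you in $M_k$ (not $M_{k+1}$): the initial data of $Z^\alpha L_0[h^0]$ for $|\alpha|\le k$ only involves $|h^0(0,\cdot)|_{k-1}$, and the extra $\nabla$ inside $\mathcal{B}_{\rho+1,\cdot}$ brings you up to $k$; the extra derivative ($M_{k+1}$) is needed only for \eqref{Yok}, where you differentiate the Duhamel integral once more.
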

\eqref{Asakura}, \eqref{Kat} for $\mu>0$, and \eqref{Yok} are essentially due to
\cite[Proposition~1.1]{asa}, \cite[Theorem~3.4]{Kub-Yok01}, and \cite[Proposition~3.1~(iii)]{Yok00},
respectively (see also \cite[Lemmas~3.2, 3.3, 3.4]{KatKub08MSJ}).
\eqref{Kat} for $\rho=1$ and $\mu=0$ (which is the only case that we 
actually need in this paper) is proved
in \cite[Lemma~2.6]{Kat04:02}. \eqref{Kat} for the case where $\rho>1$ and $\mu=0$ can be proved similarly.

Note that for $R>0$ and $\rho>0$ there exists a positive constant $C$ such that
\begin{equation}
\label{Equi01}
C^{-1} \jb{t}^\rho\le \jb{t+|x|}\Phi_{\rho-1}(t,x)\le C \jb{t}^\rho \quad \text{ for $t\ge 0$ and $|x|\le R$},
\end{equation}
and
\begin{equation}
\label{Equi02}
C^{-1} \jb{t}^\rho\le \jb{x}\jb{t-|x|}^\rho 
\le C \jb{t}^\rho \quad \text{ for $t\ge 0$ and $|x|\le R$}.
\end{equation}

We assume $\rho\ge 1$.
Writing $\zeta_0=K_0[\psi_2\vec{f}\, ]+L_0[\psi_2h_1]+L_0[\psi_2h_2]$,
from Lemma~\ref{CauchyDecay}, \eqref{A01}, and \eqref{Equi01} we get
\begin{equation} \label{mai01}
\jb{x}\jb{t-|x|}^\rho \left|\pa_a\bigl(\psi_1\zeta_0\bigr)(t,x)\right|_k\le C {\mathcal C}_{\rho,\mu_1,\mu_2,k+1}(t),
\end{equation}
where 
$$
{\mathcal C}_{\rho,\mu_1,\mu_2,k}(t)={\mathcal A}_{\rho+2,k}[\vec{f}\,]+C\sum_{j=1}^2 
  \Psi_{\mu_j}(t) \|h_j(t)\!:\!{N_{k}(W_{\rho+\mu_j,1-\mu_j})}\|.
$$
Similarly we have
\begin{equation} \label{mai01b}
\jb{t+|x|}\Phi_{\rho-1}(t,x)\left|\zeta_0(t,x)\right|_k\le C {\mathcal C}_{\rho,\mu_1,\mu_2,k}(t).
\end{equation}

To estimate $S_i$ for $1\le i\le 4$, we use 
\cite[Lemma~4.1]{KatKub08MSJ},
whose first part essentially follows from the local energy decay:
Since the obstacle ${\mathcal O}$ is assumed to be non-trapping, 
it is ``admissible'' in the sense of Definition 1.2 in \cite{KatKub08MSJ}
for $\ell=0$ and any $\gamma_0>0$ (for the proof, see \cite[Appendix B]{KatKub08MSJ}).
Therefore, (i) and (ii) of Lemma~4.1 in \cite{KatKub08MSJ} with $c=1$
lead to the following:
\begin{lm}\label{LocalEnergy} Let $M>1$, and $\rho>0$. We define
$\Omega_R=\Omega\cap B_R(0)$ for $R>1$.
\\
{\rm (1)} 
Let $\chi$ be a smooth radially symmetric function satisfying $\supp \chi\subset B_M(0)$.
For $\Xi=(\vec{f}, h)$ satisfying the compatibility condition and vanishing
for $|x|\ge R(>1)$, we have
\begin{align}
\label{61-1}
\jb{t}^\rho|\chi S[\Xi](t,x)|_k\le & C{\mathcal A}_{\rho+1, k+1}[\vec{f}\,]\\
& {}+C\sum_{|\beta|\le k+1}\sup_{(s,x)\in[0,t]\times \Omega_R}\jb{s}^\rho|\pa^\beta h(s,x)|.
\nonumber
\end{align}
 \smallskip
{\rm(2)} If $\supp \vec{f^{0}}\cup \supp h^{0}(t,\cdot)\subset 
              \overline{B_R(0)\setminus B_1(0)}$ for any $t\in [0,T)$ with some $R>1$, then
we have
\begin{align}
\label{61-3}
& \jb{x}\jb{t-|x|}^\rho|(\pa S_0[(\vec{f^{0}}, h^{0})])(t,x)|_k \\
& \qquad \le C{\mathcal A}_{\rho+2, k+1}[\vec{f^{0}}]+C\sum_{|\beta|\le k+1} \sup_{(s,x)\in[0,t]\times \Omega_R}\jb{s}^\rho|\pa^\beta h^{0} (s,x)|.
\nonumber
\end{align}

\end{lm}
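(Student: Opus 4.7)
The plan is to reduce both assertions to two classical facts about the three-dimensional wave equation: the non-trapping assumption on $\mathcal{O}$ yields polynomial local energy decay at an arbitrary rate, and the sharp Huygens principle for the free wave equation in $\R^3$ forces the solution emanating from annularly supported data to live in a thin shell near the light cone.

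For part~(1), I would first invoke the local energy decay: because $\mathcal{O}$ is non-trapping, for any $\rho>0$ and any smooth cut-off $\chi$ with $\supp\chi\subset B_M(0)$, one obtains a bound of the form
\[
\|\chi S[\Xi](t)\!:\!L^2(\Omega)\|+\|\chi\,\pa S[\Xi](t)\!:\!L^2(\Omega)\|\le C\jb{t}^{-\rho}\bigl({\mathcal A}_{\rho+1,1}[\vec{f}\,]+\text{forcing term}\bigr),
\]
whenever $\Xi=(\vec{f},h)$ is spatially supported in $\overline{B_R(0)}$ and satisfies the compatibility condition (this is the Morawetz--Ralston--Strauss / Vainberg decay; cf.~Appendix~B of \cite{KatKub08MSJ}). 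To upgrade this $L^2$ bound to the pointwise $|\cdot|_k$-bound claimed, I would commute the equation with $\pa_t^j$, which is legitimate because $\pa_t$ commutes with $\dal$ and preserves the Dirichlet condition, producing the same $\jb{t}^{-\rho}$ decay for $\pa_t^j\chi S[\Xi]$ in $L^2$. Lemma~\ref{elliptic} then trades time derivatives for spatial ones via $\Delta_x(\pa_t^j u)=\pa_t^{j+2}u-\pa_t^j h$, giving $H^{k+2}$-control of $\chi S[\Xi]$. Standard Sobolev embedding on compactly supported functions yields the pointwise bound for $|\pa^\beta\chi S[\Xi]|$ with $|\beta|\le k$. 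Finally, on $\supp\chi\subset B_M(0)$ one has $|O_{ij}\varphi|\le 2M|\nabla_x\varphi|$, so the $Z^\alpha$-norm $|\cdot|_k$ and the ordinary $\pa^\beta$-norm are equivalent with constants depending only on $M$, completing the reduction.

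For part~(2), the idea is to exploit the explicit Kirchhoff representation for the free wave equation in $\R^3$. Since $\vec{f^0}$ and each $h^0(s,\cdot)$ are supported in the annulus $\{1\le|y|\le R\}$, strong Huygens forces $K_0[\vec{f^0}](t,x)$ to vanish unless the sphere $\{|y-x|=t\}$ meets the annulus, i.e.~unless $|\,|x|-t\,|\le R$. A similar concentration holds for $L_0[h^0]$ through Duhamel and Kirchhoff: the representation
\[
L_0[h^0](t,x)=\frac{1}{4\pi}\int_0^t\frac{1}{t-s}\int_{|y-x|=t-s}h^0(s,y)\,dS_y\,ds
\]
involves only those $s$ with $|\,|x|-(t-s)\,|\le R$, a range of length $O(R)$ on which $t-s\sim|x|$; hence $|L_0[h^0](t,x)|\lesssim |x|^{-1}\sup_s\|h^0(s)\|_{L^\infty}$, and reinstating the weight $\jb{s}^\rho$ on the right gives the $\jb{x}\jb{t-|x|}^\rho$ factor after one differentiation in $t$ or $x$. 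To reach the full $|\cdot|_k$-norm, use that each $Z_a$ commutes with $\dal$ and that, on the compact spatial support of $\vec{f^0}$ or $h^0$, $Z^\alpha$ is controlled by finitely many $\pa^\beta$-derivatives.

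The main obstacle is the first step of part~(1), namely the polynomial local energy decay at arbitrary rate $\rho$; this is precisely where the non-trapping hypothesis is used in an essential way, and everything else (elliptic regularity, Sobolev embedding, commuting cut-offs and $Z$-fields through $\dal$) is routine. The Cauchy-problem part~(2) is computationally transparent once Kirchhoff's formula and the compact spatial support of the data and forcing are invoked.
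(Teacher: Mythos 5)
The paper itself does not prove Lemma~\ref{LocalEnergy}: it simply invokes Lemma~4.1 of \cite{KatKub08MSJ}, after noting that the non-trapping hypothesis makes $\mathcal{O}$ ``admissible'' there, so part~(1) comes from local energy decay and part~(2) from standard estimates for the free wave in $\R^3$. Your sketch fills in precisely what that citation conceals, and your high-level plan is the right one. Two points, however, deserve to be made precise. First, in part~(1) you cannot apply Lemma~\ref{elliptic} as stated, since its right-hand side involves the \emph{global} $L^2$-norm of $\nabla_x\varphi$ over $\Omega$, which does not decay; what you actually need is a \emph{localized} elliptic estimate on $\Omega\cap B_{M+1}(0)$ (interior/boundary regularity with cut-offs, absorbing the commutator terms), so that the local energy decay for $\partial_t^j u$ can be bootstrapped to the $H^{k+2}$-control you claim. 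Second, in part~(2) the sentence ``reinstating the weight $\jb{s}^\rho$ on the right gives the $\jb{x}\jb{t-|x|}^\rho$ factor after one differentiation'' glosses over the real mechanism: since the annular support forces $|\,|x|-(t-s)\,|\le R$, the Duhamel integral only sees $s$ in an interval of length $O(R)$ centered at $t-|x|$, on which $\jb{s}\sim\jb{t-|x|}$; it is this identification that converts the $\jb{s}^\rho$ weight on $h^0$ into the $\jb{t-|x|}^{-\rho}$ decay of $L_0[h^0]$, and the $\jb{x}^{-1}$ decay comes separately from $1/(t-s)\sim|x|^{-1}$ on the same range together with the $O(R^2)$ bound on the area of the sphere's intersection with the annulus. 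With these two clarifications your argument is sound and matches, in substance, the proof in \cite{KatKub08MSJ} that the paper relies upon.
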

We are going to estimate $S_1$. 
If we put $\zeta_1=L\bigl[[\psi_1, -\Delta_x] \zeta_0 \bigr]$, then we have $S_1=(1-\psi_2)\zeta_1$.
If we recall \eqref{Equi01} and \eqref{Equi02}, then it follows from \eqref{61-1} 
(with $\chi=1-\psi_2$, $\vec{f}\equiv 0$, and
$h=[\psi_1, -\Delta_x]\zeta_0$)
and \eqref{mai01b} that
\begin{equation}
\label{mai02}
\jb{x}\jb{t-|x|}^\rho |S_1(t,x)|_{k+1}\le C {\mathcal C}_{\rho,\mu_1,\mu_2,k+3}(t).
\end{equation}

We set $\zeta_2=[\psi_2, -\Delta_x]\zeta_1$. 
Similarly to \eqref{mai02}, from \eqref{61-1} we get
\begin{equation}
\label{mai02b}
\jb{t}^\rho |\zeta_2(t,x)|_{k+1} \le C {\mathcal C}_{\rho,\mu_1,\mu_2,k+4}(t).
\end{equation}
Observing that $S_2=-L_0[\zeta_2]$, from \eqref{mai02b} and \eqref{61-3} we get
\begin{equation}
\label{mai03}
\jb{x}\jb{t-|x|}^\rho |\pa S_2(t,x)|_{k}\le C {\mathcal C}_{\rho,\mu_1,\mu_2,k+4}(t).
\end{equation}

Setting $\zeta_3=S[(1-\psi_2)\Xi]$, we have $S_3=(1-\psi_3) \zeta_3$.
Since $\mu_i\ge 0$ for $i=1, 2$, we get
$$
\jb{s}^\rho\le \jb{x}\jb{s+|x|}^{\rho+\mu_i}{\mathcal W}_-(s,|x|)^{1-\mu_i}=\jb{x}W_{\rho+\mu_i,1-\mu_i}(s,x),
$$
where ${\mathcal W}_-(t,r)=\min\{\jb{r}, \jb{t-r} \}$.
Therefore
\eqref{61-1} leads to
\begin{equation}
\label{mai04}
\jb{x}\jb{t-|x|}^\rho|S_3(t,x)|_{k+1}\le C {\mathcal C}_{\rho,\mu_1,\mu_2,k+2}(t)
\end{equation}
by virtue of \eqref{Equi02}.

Similarly to \eqref{mai04}, \eqref{61-1} also yields
\begin{equation}
\label{mai04b}
\jb{t}^\rho|[\psi_3, -\Delta_x]\zeta_3(t,x)|_{k+1}\le C {\mathcal C}_{\rho,\mu_1,\mu_2,k+3}(t).
\end{equation}
Since $S_4=-L_0\bigl[[\psi_3,-\Delta_x]\zeta_3\bigr]$,
from \eqref{mai04b} and \eqref{61-3} we obtain
\begin{equation}
\label{mai05}
\jb{x}\jb{t-|x|}^\rho|\pa S_4(t,x)|_{k}\le  C {\mathcal C}_{\rho,\mu_1,\mu_2,k+3}(t).
\end{equation}

Finally we obtain \eqref{ba4} from 
\eqref{mai01}, \eqref{mai02}, \eqref{mai03}, \eqref{mai04}, and \eqref{mai05}.
This completes the proof.
\qed
\begin{center}
{\bf Acknowledgments}
\end{center}
The first author is partially supported by 
Grant-in-Aid for Scientific Research (C) (No.~20540211), JSPS.
The second author is partially supported by 
Grant-in-Aid for Challenging Exploratory Research (No.~22654017), JSPS 
and by Grant-in-Aid for Scientific Research (S) (No.~20224013), JSPS.

\end{document}